\newcommand{\todo}[1]{%
\marginpar{\begin{flushleft}\textbf{#1}\end{flushleft}}%
}
\renewcommand{\todo}[1]{}
\newcommand{\lp}{\left(\!\left(}
\newcommand{\rp}{\right)\!\right)}
\renewcommand{\ll}{\llbracket}
\newcommand{\rr}{\rrbracket}
\newcommand{\N}{\mathbb{N}}
\newcommand{\Z}{\mathbb{Z}}
\newcommand{\Q}{\mathbb{Q}}
\newcommand{\AO}{\operatorname{O}}
\newcommand{\E}{\mathbb{E}}
\newcommand{\F}{\mathbb{F}}
\renewcommand{\P}{\mathbb{P}}
\renewcommand{\d}{\operatorname{d}}
\newcommand{\OO}{\mathcal{O}}
\newcommand{\mm}{\mathfrak{m}}
\renewcommand{\SS}{\mathcal{S}}
\newcommand{\into}{\hookrightarrow}
\newcommand{\ord}{\operatorname{ord}}
\newcommand{\spec}{\operatorname{Spec}}
\newcommand{\cent}{\operatorname{center}}
\newcommand{\supp}{\operatorname{supp}}
\newcommand{\totfrac}{\mathcal{Q}}
\newcommand{\ratto}{\dashrightarrow}
\newtheorem{thm}{Theorem}[section]
\newtheorem{lem}[thm]{Lemma}
\newtheorem{cor}[thm]{Corollary}
\newtheorem{rem}[thm]{Remark}
\newtheorem{dfn}[thm]{Definition}
\numberwithin{equation}{section}
\begin{document}

% Frontmatter
% -----------
\title{Adjoint Computation for Hypersurfaces Using Formal Desingularizations}

\author{Tobias Beck and Josef Schicho}
\address{Symbolic Computation Group, Johann Radon Institute for Computational and Applied Mathematics, Austrian Academy of Sciences, Altenberger Stra{\ss}e 69, A-4020 Linz, Austria}
\email{Tobias.Beck@ricam.oeaw.ac.at}
\email{Josef.Schicho@ricam.oeaw.ac.at}

\thanks{The authors were supported by the FWF (Austrian Science Fund) in the frame of the research project SFB F1303 (part of the  Special Research Program ``Numerical and Symbolic Scientific Computing'').}
\date{\today}

\begin{abstract}
We show how to use formal desingularizations (defined earlier by the first author) in order to compute the global sections (also called adjoints) of twisted pluricanonical sheaves. These sections define maps that play an important role in the birational classification of schemes.
\end{abstract}

\maketitle

\tableofcontents

% Mainmatter
% ----------
\section{Introduction}

In \cite{TB:2007} we have introduced formal desingularizations of schemes and shown how to compute them efficiently for surfaces $X \subset \P^3$. There we have already mentioned as an application the computation of invertible sheaves on $Y$ for a certain desingularization $\pi: Y \to X$. In this report we want to make this precise using the powers of the canonical sheaf on $Y$. More precisely we want to determine the coherent sheaf $\pi_*(\omega_Y^{\otimes m})$ which can be given by its associated graded module $$\Gamma_*(\pi_*(\omega_Y^{\otimes m})) := \bigoplus_{n \in \Z} \Gamma(X, \pi_*(\omega_Y^{\otimes m}) \otimes_{\OO_X} \OO_X(n)),$$ see \cite[p.\ 118]{RH:1977}. The sheaves $\pi_*(\omega_Y^{\otimes m}) \otimes_{\OO_X} \OO_X(n)$ are called \emph{twisted pluricanonical}. Our Main Theorem~\ref{thm.FormalCrit} gives a criterion for computing its homogeneous components. The theory is independent of the dimension and given for hypersurfaces of some projective space.

The importance of the components of $\Gamma_*(\pi_*(\omega_Y^{\otimes m}))$ stems from the birational classification of schemes. For example, they provide an effective way to check Castelnuovo's Criterion and perform the Enriques-Manin Reduction of rational surfaces \cite{JS:1998b}.

This report is structured as follows: In Section~\ref{sec.Definition} we start by recalling the definition of formal desingularizations for the convenience of the reader. In Section~\ref{sec.Adjoints} we define the sheaf of $m$-adjoints on $X$ by a property involving formal prime divisors and show that it is isomorphic to $\pi_*(\omega_Y^{\otimes m})$. In particular it is independent of a special $Y$. In Section~\ref{sec.Computing} we find a super-sheaf of $\pi_*(\omega_Y^{\otimes m})$ and show that the defining property of the latter is easily checked using a formal desingularization of $X$. This immediately yields Algorithm~\ref{alg.Adjoints} given in Section~\ref{sec.Algorithm}. We close with an example.

Before we proceed we recall and fix some notions. Let $\E$ be a field of characteristic zero and $X$ and $Y$ \emph{integral $\E$-schemes}. All (rational) maps are relative over $\spec \E$. By $\E(X)$ and $\E(Y)$ we denote the respective function fields. A \emph{rational map} $\pi: Y \ratto X$ is given by a tuple $(V,\pi)$ s.t.\ $V \subseteq Y$ is open and $\pi: V \to X$ is a regular morphism. Note that we do not restrict to schemes of finite type here. In particular all regular morphisms are rational maps. Two tuples $(V_1,\pi_{1})$ and $(V_2,\pi_{2})$ are \emph{equivalent}, or define the same rational map, if $\pi_{1}|_{V_1 \cap V_2} = \pi_{2}|_{V_1 \cap V_2}$.

Assume that two maps send the generic point of $Y$ to $p \in X$ (its image is always defined for rational maps). Then $(V_1,\pi_{1})$ and $(V_2,\pi_{2})$ are equivalent iff the induced inclusions of fields $\OO_{X,p}/\mm_{X,p} \into \E(Y)$ are the same (where $\mm_{X,p} \subset \OO_{X,p}$ is the maximal ideal). In particular if $\pi$ is \emph{dense}, {\it i.e.}, $p$ is the generic point of $X$, we get an inclusion $\E(X) \into \E(Y)$ determining $\pi$.

Note, however, that not all such field inclusions yield rational maps under our assumption since we have not yet restricted to schemes of finite type over $\E$. {\it E.g.}, let $X := \spec \E[x]$, $Y := \spec \E[x]_{\langle x \rangle}$ and $\pi: Y \to X$ be the morphism induced by localization. Then $\pi$ induces an isomorphism of function fields $\E(X) \cong \E(Y)$. Nevertheless $\pi$ has no rational inverse. A rational map with inverse is called \emph{birational} (or also a \emph{birational transformation}).

Further it is easy to see that dense rational maps may be composed. A rational map has a \emph{domain of definition}, which is the maximal open set on which it can be defined (equivalently, the union of all such open sets).

\section{Definition of Formal Desingularizations}\label{sec.Definition}

From now on $X$ and $Y$ will denote \emph{separated, integral schemes of finite type over $\E$} and they will have the same dimension $l$. Let $(A,\mathfrak{m})$ be a valuation ring of $\E(X)$ over $\E$ (where $\mathfrak{m}$ is the maximal ideal). If $A$ is discrete of rank $1$ and the transcendence degree of $A/\mathfrak{m}$ over $\E$ is $n-1$ then it is called a \emph{divisorial valuation ring of $\E(X)$ over $\E$} or a \emph{prime divisor of $\E(X)$} (see, {\it e.g.}, \cite[Def.~2.6]{MS:1990}). It is an \emph{essentially finite}, regular, local $\E$-algebra of Krull-dimension $1$ ({\it i.e.}, the localization of a finitely generated $\E$-algebra at a prime ideal, see \cite[Thm.~VI.14.31]{OZ_PS:1960}).

Let $(A,\mathfrak{m})$ be a divisorial valuation ring of $\E(X)$ over $\E$. By \cite[Lem.~II.4.4.]{RH:1977} the inclusion $A \subset \E(X)$ defines a unique morphism $\spec \totfrac(A) \to X$ and therefore a rational map $\spec A \ratto X$ sending generic point to generic point. Composing this with the morphism obtained by the $\mathfrak{m}$-adic completion $A \to \widehat{A}$ we get a rational map $\spec \widehat{A} \ratto X$ in a natural way.

\begin{dfn}[Formal Prime Divisor]\label{dfn.FormalDiv}
Let $(A,\mathfrak{m})$ be a divisorial valuation ring of $\E(X)$ over $\E$. Assume that the rational map $\spec \widehat{A} \ratto X$ (as above) is actually a \emph{morphism} $\varphi: \spec \widehat{A} \to X$ ({\it i.e.}, defined also at the closed point). Then $\varphi$ is a representative for a class of schemes up to $X$-isomorphism. This class (and, by abuse of notation, any representative) will be called a \emph{formal prime divisor} on $X$.
\end{dfn}

Hence we may compose a representative $\varphi$ with an isomorphism $\spec B \to \spec \widehat{A}$ to get another representative for the same formal prime divisor. By the \emph{Cohen Structure Theorem} (see, {\it e.g.}, \cite[Thm.~7.7]{DE:1995} with $I=0$) we know that $\widehat{A} \cong \F_\varphi\ll t\rr $ with $\F_\varphi := A/\mathfrak{m} \cong \widehat{A}/\mathfrak{m}\widehat{A}$. Therefore we will sometimes assume that $\varphi$ is of the form $\spec \F_\varphi\ll t\rr \to X$. 

Formal prime divisors provide an algorithmic way for dealing with certain valuations; A formal prime divisors yields an inclusion of function fields $\E(X) \into \F_\varphi\lp t\rp $. Vice versa, by what was said above, $\varphi$ is determined by this inclusion. So it is this piece of information one has to compute (see Remark~\ref{rem.FormalDesingComp} below). Composing this inclusion with the order function $\ord_t: \F_\varphi\lp t \rp \to \Z$ we get the corresponding divisorial valuation (see Definition~\ref{dfn.AssocVals} below).

We want to single out a special class of formal prime divisors.

\begin{dfn}[Realized Formal Prime Divisors]\label{dfn.RealizedDiv}
Let $p \in X$ be a regular point of codimension $1$. The formal prime divisor $$\spec \widehat{\OO_{X,p}} \to X$$ (given by composing the canonic morphism $\spec \OO_{X,p} \to X$ with the morphism induced by the completion $\OO_{X,p} \to \widehat{\OO_{X,p}}$) is called \emph{realized}.
\end{dfn}

If $X$ is \emph{normal} then all generic points of closed subsets of codimension $1$ are necessarily regular \cite[Thm~II.8.22A]{RH:1977}. Therefore there is a one-one correspondence of realized formal prime divisors and prime Weil divisors. Another important fact is that we can match the formal prime divisors of birationally equivalent schemes under certain conditions.

\begin{lem}[Pullback along Proper Morphisms]\label{lem.DivPullback}
Let $\pi: Y \to X$ be a proper, birational morphism. A formal prime divisor $\varphi: \spec \F_\varphi\ll t\rr \to X$ lifts to a unique formal prime divisor $\pi^* \varphi: \spec \F_\varphi\ll t\rr \to Y$ s.t.\ $\pi \circ (\pi^* \varphi) = \varphi$. Vice versa, a formal prime divisor on $Y$ extends to a unique formal prime divisor on $X$, hence $\pi^*$ is a bijection.
\end{lem}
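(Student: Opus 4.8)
The plan is to use the valuative criterion of properness together with the identification of formal prime divisors with their associated inclusions of function fields. Let $\varphi:\spec\F_\varphi\ll t\rr\to X$ be a formal prime divisor, and write $A$ for the underlying divisorial valuation ring of $\E(X)$, so that $\widehat A\cong\F_\varphi\ll t\rr$ and the fraction field is $\F_\varphi\lp t\rp$. Since $\pi$ is birational we identify $\E(Y)=\E(X)$, so $A$ is also a divisorial valuation ring of $\E(Y)$ over $\E$ (its residue field and rank are intrinsic to the valuation, not to $X$). This already produces a candidate rational map $\spec\widehat A\ratto Y$ sending generic point to generic point, exactly as in the discussion preceding Definition~\ref{dfn.FormalDiv}; call its restriction to the closed point the thing we must show is defined.

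First I would show the lift exists as a \emph{morphism}. Consider the rational map $\spec A\ratto Y$ induced by $\E(Y)\cong\E(X)$; it is defined at the generic point. Because $(A,\mathfrak m)$ is a valuation ring and $\pi$ is proper, the valuative criterion of properness (\cite[Thm.~II.4.7]{RH:1977}) applied to the commutative square with $\spec\E(X)\to Y$ on top and $\spec A\to X$ (which exists since $A$ dominates the image point, as $\varphi$ is a morphism on $X$) on the bottom yields a unique morphism $\spec A\to Y$ lifting it. Passing to the $\mathfrak m$-adic completion $A\to\widehat A$ and composing gives a rational map $\spec\widehat A\ratto Y$; I must check it is a morphism at the closed point. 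This is where the main work lies: one composes with $\pi$ to land in $X$, and there it \emph{does} extend to the closed point by hypothesis on $\varphi$; so it remains to see that extending after $\pi$ forces extension before $\pi$. Here properness of $\pi$ is used a second time, now via the valuative criterion for the complete discrete valuation ring $\widehat A$: the composite $\spec\widehat A\to X$ factors set-theoretically and then scheme-theoretically through $Y$ because $\pi$ is proper and the generic point already lifts, giving $\pi^*\varphi:\spec\widehat A\to Y$ with $\pi\circ(\pi^*\varphi)=\varphi$. That $\pi^*\varphi$ is again a formal prime divisor is automatic: it arises from the same valuation ring $A$, now regarded on $Y$, and $\widehat A\cong\F_\varphi\ll t\rr$ is unchanged.

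Uniqueness of $\pi^*\varphi$ is the easy direction: two lifts agree at the generic point (both induce the identity on $\E(Y)=\E(X)$), and a morphism from a reduced scheme to a separated scheme is determined by its restriction to a dense open, so they coincide; alternatively invoke the uniqueness clause in the valuative criterion directly.

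For the converse, let $\psi:\spec\F_\psi\ll s\rr\to Y$ be a formal prime divisor on $Y$, with underlying valuation ring $B\subset\E(Y)$. Then $\pi\circ\psi:\spec\widehat B\to X$ is a morphism (composition of morphisms), it sends generic point to generic point since $\pi$ is dense, and its restriction along $\spec B\to\spec\widehat B$ realizes the inclusion $\E(X)=\E(Y)\hookleftarrow B$; hence the associated divisorial valuation of $\E(X)$ is that of $B$, and $\pi\circ\psi$ is the formal prime divisor on $X$ extending $\psi$. Uniqueness follows as before. Finally, the two constructions $\varphi\mapsto\pi^*\varphi$ and $\psi\mapsto\pi\circ\psi$ are mutually inverse: composing and then lifting, or lifting and then composing, returns the original formal prime divisor by the uniqueness statements just proved, so $\pi^*$ is a bijection. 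The one delicate point throughout — the step I expect to be the genuine obstacle — is verifying that the lift $\spec\widehat A\ratto Y$ is defined at the \emph{closed} point; everything else is a formal consequence of the valuative criterion of properness and the separatedness of $Y$.
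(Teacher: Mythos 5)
Your proof is correct and is in essence the intended argument: the paper gives no proof of this lemma itself but defers to \cite[Cor.~2.4]{TB:2007}, and the valuative criterion of properness applied to the discrete valuation rings $A$ and $\widehat A\cong\F_\varphi\ll t\rr$ (together with separatedness plus reducedness for uniqueness, and composition with $\pi$ for the converse) is exactly the standard route. One small simplification: your second appeal to the criterion is redundant, because once $\spec A\to Y$ exists, composing with the local morphism $\spec\widehat A\to\spec A$ already yields a morphism defined at the closed point that represents the rational map $\spec\widehat A\ratto Y$, so the ``main work'' you isolate there has in fact already been done.
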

\begin{proof}
See \cite[Cor.\ 2.4]{TB:2007}.
\end{proof}

We will apply the operator $\pi^*$ also to sets of formal prime divisors.

\begin{dfn}[Center and Support]\label{dfn.CenterSupp}
Let $\varphi: \spec \F_\varphi\ll t\rr \to X$ be a formal prime divisor. We define its \emph{center}, in symbols $\cent(\varphi) \in X$, to be the image of the closed point. Further the \emph{support} of a \emph{finite} set of formal prime divisors $\SS$ is defined as $\supp(\SS) := \overline{\{\cent(\varphi) \mid \varphi \in \SS \}}$, {\it i.e.}, the closure of the set of all centers.
\end{dfn}

Now we are in the situation to define formal desingularizations.

\begin{dfn}[Formal Description of a Desingularization]\label{dfn.FormalDesc}
Let $\pi: Y \to X$ be a \emph{desingularization}, {\it i.e.}, $\pi$ is proper, birational and $Y$ is regular. Let $\SS$ be a \emph{finite} set of formal prime divisors on $X$. We say that $\SS$ is a \emph{formal description} of $\pi$ iff
\begin{enumerate}
\item all divisors in $\pi^* \SS$ are realized,
\item $\pi^{-1}(\supp(\SS)) = \supp(\pi^* \SS)$ and
\item the restricted morphism $Y \setminus \supp(\pi^* \SS) \to X \setminus \supp(\SS)$ is an isomorphism.
\end{enumerate}
\end{dfn}

The set $\SS$ itself consists of formal prime divisors on $X$ and makes no reference to the morphism $\pi$. By another definition we can \emph{avoid} mentioning any \emph{explicit} $\pi$.

\begin{dfn}[Formal Desingularization]\label{dfn.FormalDesing}
Let $\SS$ be a finite set of formal prime divisors on $X$. Then $\SS$ is called a \emph{formal desingularization} of $X$ iff there \emph{exists some} desingularization $\pi$ s.t.\ $\SS$ is a formal description of it.
\end{dfn}

In \cite[Thm.\ 2.9]{TB:2007} it is shown that such $\pi$ is unique up to $X$-isomorphism if $X$ is a surface. In this case we also have an efficient algorithm to compute a set $\SS$. Informally speaking $\SS$ makes it possible to work with invertible sheaves on $Y$, although $Y$ is not constructed explicitely.

\begin{rem}[Formal Desingularizations in Higher Dimensions]\label{rem.FormalDesingComp}
This paper deals with projective hypersurfaces of arbitrary dimension $l$. Also in this case, formal desingularizations exist but are not so easy to compute. We want to indicate how a formal desingularization could be obtained by an ad hoc method (modulo a means to represent algebraic power series).

First compute a desingularization $\pi: Y \to X$, for example, by {\sc Villamayor}'s algorithm \cite{SE_OV:2000}. Let $Z \subset X$ be the singular locus of $X$. The algorithm will produce $\pi$ s.t.\ $\pi^{-1}(Z) \subset Y$ is a normal crossing divisor and $\pi$ restricts to an isomorphism on $Y \setminus \pi^{-1}(Z)$. Let $\{p_1,\dots,p_r\} \subset Y$ be the \emph{finitely many} generic points of the irreducible components of $\pi^{-1}(Z)$.

Next we have to compute isomorphisms $\widehat{\OO_{Y,p_i}} \to \F_i\ll t\rr $ where $\F_i := \OO_{Y,p_i} / \mm_{Y,p_i}$. Therefore let $U_i \cong \spec \E[x_{i, 1},\dots,x_{i, m_i}] / \langle f_{i,1}, \dots, f_{i,n_i} \rangle$ be an affine neighborhood of $p_i$. Constructing the isomorphism involves finding certain ``minimal'' algebraic power series $X_{i,1}, \dots, X_{i,m_i} \in \F_i\ll t\rr $ (compare \cite[Cor.\ A.2]{TB:2007}) that simultaneously solve $f_{i,1}, \dots, f_{i,n_i}$, essentially, computing a Taylor expansion. These power series together with $\pi$ and the inclusions $U_i \into Y$ can be used to represent a formal prime divisor $\varphi_i$ via, for example, the induced embedding of function fields $\E(X) \into \F_i\lp t\rp $. Set $\SS := \{ \varphi_1, \ldots, \varphi_r \}$.

This approach of course is not practical. It suffers from the huge computational overhead that the general resolution machinery involves. Also we are not very flexible with regard to the representation of the blown up schemes, thus annihilating the benefits of formal descriptions.
\end{rem}

%%% Local Variables: 
%%% TeX-master: "~/doc/work/academic/worked_texts/Surface-Param/SurfParm.tex"
%%% End: 

\section{Adjoint Differential Forms}\label{sec.Adjoints}

We write $$\Omega_{X,rat}^m := (\Omega_{\E(X) \mid \E}^{\wedge l})^{\otimes m}$$ for the $m$-fold tensor power of the rational differential $l$-forms (which is an $1$-dimensional $\E(X)$-vector space by \cite[Thm.~16.14]{DE:1995} and \cite[Prop.~XIX.1.1 \& Cor.~XVI.2.4]{SL:2002} and can as well be considered a constant sheaf of $\OO_X$-modules).

Let $\varphi: \widehat{A} \to X$ be a formal prime divisor. We define $$\widetilde{\Omega}_{\widehat{A} \mid \E}^m := \widehat{A} \otimes_{A} (\Omega_{A \mid \E}^{\wedge l})^{\otimes m} \text{ (canonically included in } \widetilde{\Omega}_{\totfrac(\widehat{A}) \mid \E}^m := \totfrac(\widehat{A}) \otimes_{A} (\Omega_{A \mid \E}^{\wedge l})^{\otimes m}),$$ which is the $m$-fold tensor power of the \emph{universally finite module of $l$-differentials of $\widehat{A}$} (see \cite[Cor.~12.5]{EK:1986}). Note that we have a derivation $\d: \totfrac(\widehat{A}) \to \totfrac(\widehat{A}) \otimes_{A} \Omega_{A \mid \E}$.

This module is independent of the choice of a representative of $\varphi$ up to $X$-isomorphism because $\widetilde{\Omega}_{\widehat{A} \mid \E}^m$ can be defined by universal properties. So we can again substitute $\widehat{A}$ by $\F_{\varphi}\ll t\rr $. If $s_{\varphi,1},\ldots,s_{\varphi,l-1} \in \F_\varphi$ is a transcendence basis over $\E$ then $\{\d s_{\varphi,1}, \ldots, \d s_{\varphi,l-1}, \d t\}$ is a free basis of $\Omega_{\F_\varphi[t]_{\langle t\rangle} \mid \E}$ by \cite[Thm.~16.14]{DE:1995} and \cite[Thm.~25.1]{HM:1989}. Therefore also $\widetilde{\Omega}_{\F_\varphi\ll t\rr  \mid \E}^m$ is free of rank $1$ generated by $(\d s_{\varphi,1} \wedge \ldots \wedge \d s_{\varphi,l-1} \wedge \d t)^{\otimes m}$. (Note that the module of K{\"a}hler differentials $\Omega_{\F_\varphi\ll t\rr  \mid \E}$ would not be finitely generated.)

We have seen above that a formal prime divisor $\varphi: \spec \widehat{A} \to X$ induces an embedding $\varphi^\# : \E(X) \to \F_{\varphi}\lp t\rp$ of function fields. This again induces embeddings $\varphi^\# : \Omega_{X,rat}^m \to \widetilde{\Omega}_{\F_{\varphi}\lp t\rp \mid \E}^m$ in the obvious way ({\it i.e.}, $\d f \mapsto \d \varphi^\#(f)$).

\begin{dfn}[Regularity of Forms at Formal Prime Divisors]\label{dfn.RegAtPrimeDiv}
Let $\varphi: \spec \F_{\varphi}\ll t\rr \to X$ be a formal prime divisor. We say that $\eta \in \Omega_{X,rat}^m$ is \emph{regular at $\varphi$} iff $\varphi^\#(\eta) \in \widetilde{\Omega}_{\F_{\varphi}\ll t\rr \mid \E}^m$.
\end{dfn}

This manner of speaking is justified by Lemma~\ref{lem.RegRealized} below.

\begin{dfn}[Sheaf of Adjoint Forms]\label{dfn.AdjForm}
The map $$U \mapsto \{ \eta \in \Omega_{X,rat}^m \mid \eta \text{ is regular at all formal prime divisors on $X$ centered in $U$} \}$$ for all open subsets $U \subseteq X$ defines a subsheaf which we call the \emph{sheaf of $m$-adjoint forms (or just $m$-adjoints)}, in symbols $\Omega_{X,adj}^m$.
\end{dfn}

By what we have worked out so far we immediately find a nice property of adjoint forms.

\begin{cor}[Covariance of Adjoint Forms]\label{cor.AdjCov}
The sheaves of adjoints are covariants under proper, birational morphisms: If $\pi: Y \to X$ is a proper, birational morphism then $\pi_*(\Omega_{Y,adj}^m) = \Omega_{X,adj}^m$ as subsheaves of $\Omega_{X,rat}^m$.
\end{cor}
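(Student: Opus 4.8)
The key observation is that both $\pi_*(\Omega_{Y,adj}^m)$ and $\Omega_{X,adj}^m$ are defined as subsheaves of the constant sheaf $\Omega_{X,rat}^m$ (using the identification $\Omega_{Y,rat}^m = \Omega_{X,rat}^m$ coming from the isomorphism of function fields $\E(X) \cong \E(Y)$ induced by the birational $\pi$). So it suffices to check that for every open $U \subseteq X$ a rational form $\eta \in \Omega_{X,rat}^m$ is a section of the left-hand side over $U$ if and only if it is a section of the right-hand side over $U$. Unwinding the pushforward, $\eta \in \Gamma(U, \pi_*(\Omega_{Y,adj}^m)) = \Gamma(\pi^{-1}(U), \Omega_{Y,adj}^m)$ means $\eta$ is regular at every formal prime divisor on $Y$ centered in $\pi^{-1}(U)$, while $\eta \in \Gamma(U, \Omega_{X,adj}^m)$ means $\eta$ is regular at every formal prime divisor on $X$ centered in $U$.

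First I would note that the notion of regularity at a formal prime divisor $\varphi$ depends only on the induced embedding $\varphi^\# \colon \E(X) \into \F_\varphi\lp t\rp$ together with the target ring, and that this data is unchanged under the bijection $\pi^*$ of Lemma~\ref{lem.DivPullback}: if $\psi = \pi^*\varphi$, then $\psi^\# = \varphi^\#$ after identifying $\E(Y)$ with $\E(X)$, and $\F_\psi = \F_\varphi$, $\widetilde{\Omega}^m_{\F_\psi\ll t\rr \mid \E} = \widetilde{\Omega}^m_{\F_\varphi\ll t\rr \mid \E}$. Hence $\eta$ is regular at $\varphi$ on $X$ if and only if it is regular at $\pi^*\varphi$ on $Y$. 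This reduces the statement to a bookkeeping question about which divisors lie over $U$.

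Next I would match up the index sets. The formal prime divisors on $Y$ centered in $\pi^{-1}(U)$ are precisely the $\pi^*\varphi$ for $\varphi$ a formal prime divisor on $X$, subject to $\cent(\pi^*\varphi) \in \pi^{-1}(U)$. Since $\pi \circ (\pi^*\varphi) = \varphi$, the closed point is sent by $\pi$ to $\cent(\varphi)$, so $\cent(\pi^*\varphi) \in \pi^{-1}(U)$ exactly when $\cent(\varphi) \in U$ (using that $\pi$ is everywhere defined so preimages of the center behave correctly). Therefore the two families of divisors over $U$ are carried to one another by the bijection $\pi^*$, and combined with the previous paragraph the two regularity conditions on $\eta$ are literally the same condition. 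This gives equality of sections over every $U$, hence equality of subsheaves.

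The main obstacle is the first reduction step: one must be careful that the identification $\Omega_{Y,rat}^m = \Omega_{X,rat}^m$ is compatible with the maps $\varphi^\#$ and $(\pi^*\varphi)^\#$, i.e. that pulling back a rational form along $\pi$ and then restricting to the formal divisor gives the same element of $\widetilde{\Omega}^m_{\F_\varphi\lp t\rp \mid \E}$ as restricting directly on $X$. This is a diagram-chase through the definitions of the embeddings of function fields and the functoriality of the universally finite differential module, using that $\pi \circ (\pi^*\varphi) = \varphi$ as morphisms to $X$; once this commutativity is in hand, everything else is formal.
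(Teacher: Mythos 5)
Your proposal is correct and follows the same route as the paper's (terse) proof: identify $\Omega_{Y,rat}^m$ with $\Omega_{X,rat}^m$ via the birational $\pi$, and then reduce everything to the bijection of formal prime divisors from Lemma~\ref{lem.DivPullback} together with the definitions of regularity and of the adjoint sheaf. The details you spell out --- compatibility of $\varphi^\#$ and $(\pi^*\varphi)^\#$ under the identification, and the matching of centers via $\pi\circ(\pi^*\varphi)=\varphi$ --- are exactly what the paper leaves implicit.
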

\begin{proof}
Since $\pi$ is birational we get a vector space isomorphism $\pi^\# : \Omega_{X,rat}^m \to \Omega_{Y,rat}^m$. With this identification $\pi_*(\Omega_{Y,adj}^m)$ becomes a subsheaf of $\Omega_{X,rat}^m$. The rest follows from the above definitions and Lemma~\ref{lem.DivPullback}.
\end{proof}

By $\Omega_{X,reg}^m := \bigotimes (\bigwedge \OO_X \d \OO_X) \subset \Omega_{X,rat}^m$ we denote the subsheaf of regular forms, {\it i.e.}, all forms locally expressible by sections of $\OO_{X}$. More precisely, if $\Omega_{X \mid \E}$ is the usual sheaf of K{\"a}hler differentials then we mean its image under the natural map $\iota: (\Omega_{X \mid \E}^{\wedge l})^{\otimes m} \to \Omega_{X,rat}^m$. Note that we do not have $(\Omega_{X \mid \E}^{\wedge l})^{\otimes m} \cong \Omega_{X,reg}^m$ in general; At singular points the K{\"a}hler differentials need not be torsion free (and neither their exterior and tensor products) whereas $\Omega_{X,reg}^m \subset \Omega_{X,rat}^m$ always is. At a regular point $p$, on the contrary, $((\Omega_{X \mid E}^{\wedge l})^{\otimes m})_p$ is free of rank $1$ (see \cite[Thm.~II.8.15]{RH:1977}).\todo{Better citation for non-closed fields?}
%\cite[Thm.~II.8.15 \& Exer.~II.8.1]{RH:1977} and \cite[Prop.~XIX.1.1 \& Cor.~XVI.2.4]{SL:2002}
Therefore $\iota$ is locally an inclusion at $p$. In the next three lemmas we want to explore in detail the relation between the concepts of adjointness and regularity (at points or formal prime divisors) for forms in $\Omega_{X,rat}^m$.

\begin{lem}[Adjoint Forms and Regular Forms]\label{lem.AdjReg1}
Let $p \in X$ be a point. Then $(\Omega_{X,reg}^m)_p \subseteq (\Omega_{X,adj}^m)_p$ as subsheaves of $\Omega_{X,rat}^m$.
\end{lem}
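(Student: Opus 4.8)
The plan is to reduce the statement to a purely local computation at a single formal prime divisor centered at $p$, and then invoke the fact that a regular form pulls back to a regular form under any local homomorphism respecting the base field. First I would observe that the assertion is local, so it suffices to show that if $\eta \in (\Omega_{X,reg}^m)_p$, then $\eta$ is regular at every formal prime divisor $\varphi$ whose center is a specialization of $p$ (i.e.\ $p$ lies in the closure of $\{\cent(\varphi)\}$, equivalently $\cent(\varphi) \in \spec \OO_{X,p}$). Fix such a $\varphi: \spec \F_\varphi\ll t\rr \to X$, and let $q := \cent(\varphi)$. By definition of $\Omega_{X,reg}^m$ as the image of $(\Omega_{X\mid\E}^{\wedge l})^{\otimes m}$, and since $q \in \spec\OO_{X,p}$, the germ $\eta$ actually lies in the image of $((\Omega_{X\mid\E}^{\wedge l})^{\otimes m})_q$; that is, $\eta$ can be written locally at $q$ as an $\OO_{X,q}$-linear combination of forms $\d g_1 \wedge \cdots \wedge \d g_l$ raised to the tensor power, with all $g_j$ and all coefficients in $\OO_{X,q}$.

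Next I would chase this expression through the map $\varphi^\#$. The morphism $\varphi$ corresponds to a local $\E$-algebra homomorphism $\OO_{X,q} \to \F_\varphi\ll t\rr$, which on fraction fields gives $\varphi^\#: \E(X) \to \F_\varphi\lp t\rp$ and on differentials gives $\varphi^\#: \Omega_{X,rat}^m \to \widetilde\Omega_{\F_\varphi\lp t\rp\mid\E}^m$ sending $\d f \mapsto \d\varphi^\#(f)$. Applying $\varphi^\#$ to the local expression for $\eta$, each coefficient in $\OO_{X,q}$ maps into $\F_\varphi\ll t\rr$, and each $\d g_j$ maps to $\d\varphi^\#(g_j) = \varphi^\#(g_j)' \,\d t + (\text{terms in the }\d s_{\varphi,i})$, all with coefficients in $\F_\varphi\ll t\rr$ because $\varphi^\#(g_j) \in \F_\varphi\ll t\rr$ and the derivation $\d$ preserves $\F_\varphi\ll t\rr$ (since $\d$ maps $\totfrac(\widehat A)$ into $\widetilde\Omega_{\widehat A\mid\E}$ compatibly with the $\widehat A$-module structure). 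Hence every wedge product $\d\varphi^\#(g_1)\wedge\cdots\wedge\d\varphi^\#(g_l)$, and consequently $\varphi^\#(\eta)$ itself, lands in $\widetilde\Omega_{\F_\varphi\ll t\rr\mid\E}^m = \F_\varphi\ll t\rr \cdot (\d s_{\varphi,1}\wedge\cdots\wedge\d s_{\varphi,l-1}\wedge\d t)^{\otimes m}$. By Definition~\ref{dfn.RegAtPrimeDiv} this says precisely that $\eta$ is regular at $\varphi$, and since $\varphi$ was arbitrary among formal prime divisors centered in $\spec\OO_{X,p}$, we get $\eta \in (\Omega_{X,adj}^m)_p$.

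The only mildly delicate point is bookkeeping about where centers are allowed to lie: one must be careful that ``formal prime divisors centered in $U$'' in Definition~\ref{dfn.AdjForm} sheafifies correctly, so that the germ $(\Omega_{X,adj}^m)_p$ is computed by testing against all $\varphi$ with $\cent(\varphi) \in \spec\OO_{X,q}$ for small neighborhoods — this is what licenses replacing $p$ by the (possibly more general) center $q$ above and using the $\OO_{X,q}$-local expression. The substantive input is simply that $\varphi^\#$ is a homomorphism of $\E$-algebras compatible with the universal finite differentials, which has already been set up; no serious obstacle remains beyond this routine verification, so the main work is just keeping the local-vs-germ indexing straight.
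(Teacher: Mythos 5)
Your core computation coincides with the paper's: since $\varphi$ is a morphism, $\varphi^\#$ carries regular functions near the center into $\F_\varphi\ll t\rr$, and the (universally finite) derivation keeps everything inside $\widetilde{\Omega}^m_{\F_\varphi\ll t\rr \mid \E}$, so any form expressible with regular coefficients and differentials of regular functions is regular at $\varphi$. The paper simply runs this observation for all $\varphi$ centered in a neighborhood $U$ of $p$ on which $\eta$ is a section of $\Omega_{X,reg}^m$, which immediately gives $\eta \in \Gamma(U,\Omega_{X,adj}^m)$ and hence the stalk statement.

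The one genuine flaw is your opening reduction: it does \emph{not} suffice to check regularity at formal prime divisors whose center is a generization of $p$. By Definition~\ref{dfn.AdjForm}, membership of $\eta$ in $(\Omega_{X,adj}^m)_p$ means there exists an open $U \ni p$ such that $\eta$ is regular at \emph{every} formal prime divisor centered in $U$, and such a $U$ necessarily contains many points that are not generizations of $p$ (the set of generizations of $p$ is not open). Your closing remark defers this as bookkeeping that ``sheafifies correctly,'' but the claim that the stalk can be tested only against divisors centered at generizations of $p$ is not formal; justifying it would essentially require the later machinery (covariance under a desingularization, as in Lemma~\ref{lem.FormalCrit}), which is not available, and is in any case circular at this stage. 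Fortunately the restriction is also unnecessary: your computation never uses that $\cent(\varphi)$ generizes $p$, only that $q := \cent(\varphi)$ lies in the open set $U$ on which $\eta$ has an expression with coefficients and differentials coming from $\Gamma(U,\OO_X) \subseteq \OO_{X,q}$, so that $\varphi^\#$ maps them into $\F_\varphi\ll t\rr$. Running the same argument for all $\varphi$ centered in $U$ closes the gap and reproduces the paper's proof.
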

\begin{proof}
Assume $\eta \in (\Omega_{X,reg}^m)_p$ or equivalently $\eta \in \bigotimes (\bigwedge \Gamma(U,\OO_{X}) \d \Gamma(U,\OO_{X}))$ for some open neighborhood $U$ of $p$. We have $\eta \in \Gamma(U,\Omega_{X,adj}^m)$ iff $\varphi^\#(\eta) \in \widetilde{\Omega}_{\F_\varphi\ll t\rr  \mid \E}^m = \bigotimes (\bigwedge \F_\varphi\ll t\rr  \d \F_\varphi\ll t\rr )$ for all formal prime divisors $\varphi: \spec \F_\varphi\ll t\rr  \to X$ centered in $U$. But for such $\varphi$ we must have $\varphi^\#(\Gamma(U,\OO_{X})) \subseteq \F_\varphi\ll t\rr $ and hence this condition is trivially fulfilled. A fortiori $\eta \in (\Omega_{X,adj}^m)_p$.
\end{proof}

\begin{lem}[Regularity and Realized Formal Prime Divisors]\label{lem.RegRealized}
Let $\varphi$ be a realized formal prime divisor on $X$, $p := \cent(\varphi)$ and $\eta \in \Omega_{X,rat}^m$. Then $\eta \in (\Omega_{X,reg}^m)_p$ iff $\eta$ is regular at $\varphi$.
\end{lem}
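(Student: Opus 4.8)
The plan is to reduce the statement to a purely local computation in the completed local ring $\widehat{\OO_{X,p}}$, where $p$ is a regular point of codimension $1$. Since $\varphi$ is the \emph{realized} formal prime divisor at $p$, by Definition~\ref{dfn.RealizedDiv} a representative is $\spec \widehat{\OO_{X,p}} \to X$, so regularity of $\eta$ at $\varphi$ means precisely $\varphi^\#(\eta) \in \widetilde{\Omega}_{\widehat{\OO_{X,p}} \mid \E}^m$. On the other hand, since $\OO_{X,p}$ is regular local of dimension $1$, the remark preceding the lemma shows that $\iota$ is locally an inclusion at $p$ and that $((\Omega_{X\mid\E}^{\wedge l})^{\otimes m})_p \cong (\Omega^m_{X,reg})_p$ is free of rank $1$. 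So the task is to compare two rank-$1$ free modules sitting inside $\Omega^m_{X,rat}$, one over $\OO_{X,p}$ and one over $\widehat{\OO_{X,p}}$, and to show a rational $m$-form lies in the first iff (after base change) it lies in the second.

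The key steps, in order: First, fix a regular system of parameters, i.e.\ a uniformizer $u \in \OO_{X,p}$ (the maximal ideal is principal), and choose elements $s_1,\dots,s_{l-1}\in\OO_{X,p}$ whose images form a transcendence basis of the residue field $\F_\varphi = \OO_{X,p}/\mm_{X,p}$ over $\E$; after completing, $u$ plays the role of $t$ and one checks that $\{\d s_1,\dots,\d s_{l-1},\d u\}$ is a free basis of $\widetilde\Omega_{\widehat{\OO_{X,p}}\mid\E}$, exactly as in the discussion of $\widetilde\Omega^m_{\F_\varphi\ll t\rr\mid\E}$ in Section~\ref{sec.Adjoints}. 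Second, observe that by the universal property of the universally finite differential module, $\{\d s_1,\dots,\d s_{l-1},\d u\}$ is \emph{already} a free basis of $\Omega_{\OO_{X,p}\mid\E}$ (this is where regularity of the point $p$ and the citation \cite[Thm.~16.14]{DE:1995}, \cite[Thm.~25.1]{HM:1989} enter), so $(\Omega^m_{X,reg})_p$ is the free $\OO_{X,p}$-module generated by $\omega_0 := (\d s_1\wedge\dots\wedge\d s_{l-1}\wedge\d u)^{\otimes m}$. Third, write the given form as $\eta = f\cdot\omega_0$ with $f\in\E(X)$; then $\eta\in(\Omega^m_{X,reg})_p$ iff $f\in\OO_{X,p}$, while $\varphi^\#(\eta)\in\widetilde\Omega^m_{\widehat{\OO_{X,p}}\mid\E}$ iff $\varphi^\#(f)\in\widehat{\OO_{X,p}}$. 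Fourth, conclude by the faithful flatness of the completion $\OO_{X,p}\to\widehat{\OO_{X,p}}$: an element of $\E(X) = \totfrac(\OO_{X,p})$ lies in $\OO_{X,p}$ iff its image lies in $\widehat{\OO_{X,p}}$, equivalently (since both rings are DVRs with the same uniformizer $u$) iff $\ord_u$ of it is nonnegative.

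The main obstacle I expect is Step~2, i.e.\ verifying carefully that the chosen differentials form a \emph{free basis} of the (universally finite, resp.\ Kähler) differential module at the regular local ring $\OO_{X,p}$ and that this choice is compatible with base change to $\widehat{\OO_{X,p}}$ — one must be a little careful because $\F_\varphi$ need not be separably generated a priori and because $\Omega_{X\mid\E}$ can have torsion at singular points (though not here, since $p$ is regular). Once the bases are pinned down, the identification of both conditions with ``$\ord_u(f)\ge 0$'' is immediate, and the equivalence drops out. Everything else — the rank-$1$ freeness, the reduction to a single coefficient $f$ — is routine given the cited results.
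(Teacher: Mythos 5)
Your proof is correct and takes essentially the same route as the paper: reduce to the stalk at the regular codimension-one point $p$, where the regular $m$-forms are free of rank one, write $\eta$ with a single rational coefficient with respect to a generator, and descend membership from $\widehat{\OO_{X,p}}$ to $\OO_{X,p}$ by faithful flatness of completion (the paper states this as $b\,\widehat{\OO_{X,p}} \cap \OO_{X,p} = b\,\OO_{X,p}$). The only deviation is your Step~2: the paper simply takes an arbitrary generator $\gamma$ of the rank-one stalk (freeness being the Hartshorne II.8.15 remark preceding the lemma), so the explicit basis $\d s_1,\dots,\d s_{l-1},\d u$ and the conormal-sequence argument you flag as the main obstacle are a correct but avoidable detour.
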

\begin{proof}
By Lemma~\ref{lem.AdjReg1} it remains to show that regularity at $\varphi$ implies $\eta \in (\Omega_{X,reg}^m)_p$. Since $\varphi$ is realized it is of the form $\spec \widehat{\OO_{X,p}} \to X$. Let $\gamma \in (\Omega_{\OO_{X,p} \mid \E}^{\wedge l})^{\otimes m}$ be a generator. Write $\eta = a/b\,\gamma$ with $a,b \in \OO_{X,p}$. Then $\varphi^\#(\eta) \in \widetilde{\Omega}_{\widehat{\OO_{X,p}} \mid \E}^m$ implies $b \vert a$ in $\widehat{\OO_{X,p}}$, in other words $a \in b\,\widehat{\OO_{X,p}}$. But $b\,\widehat{\OO_{X,p}} \cap \OO_{X,p} = b\,\OO_{X,p}$ by \cite[Thm.~7.5(ii)]{HM:1989} (because completion is \emph{faithfully flat} \cite[Thm.~7.2 \& Thm.~8.8]{HM:1989}). Therefore $b \vert a$ in $\OO_{X,p}$ and $\eta \in (\Omega_{X,reg}^m)_p$.
\end{proof}

\begin{lem}[Adjoint Forms at Regular Points]\label{lem.AdjReg2}
Let $p \in X$ be a \emph{regular} point. Then $(\Omega_{X,reg}^m)_p = (\Omega_{X,adj}^m)_p$ as subsheaves of $\Omega_{X,rat}^m$.
\end{lem}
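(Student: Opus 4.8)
The plan is to prove the nontrivial inclusion $(\Omega_{X,adj}^m)_p \subseteq (\Omega_{X,reg}^m)_p$, since the reverse inclusion is exactly Lemma~\ref{lem.AdjReg1}. The idea is to exploit that the realized formal prime divisors at codimension-$1$ points of the local ring $\OO_{X,p}$ already detect regularity, by Lemma~\ref{lem.RegRealized}, and then to pass from "regular in codimension $1$" to "regular at $p$" using that $\OO_{X,p}$ is regular, hence normal, hence a finite intersection of its codimension-$1$ localizations.

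First I would fix a form $\eta \in (\Omega_{X,adj}^m)_p$ and shrink to an affine neighborhood $U = \spec R$ of $p$ on which $\eta$ is regular at every formal prime divisor centered in $U$, and on which $\iota$ is an inclusion (possible since $p$ is regular, as noted after Lemma~\ref{lem.AdjReg1}); so $(\Omega_{X \mid \E}^{\wedge l})^{\otimes m}$ is a locally free rank-$1$ $R$-module after possibly shrinking further, with a generator $\gamma$, and we may write $\eta = a/b\,\gamma$ with $a,b \in R$. I want to show $b \mid a$ in $\OO_{X,p}$, equivalently $a/b \in \OO_{X,p}$. For each prime $\mathfrak{q}$ of $R$ of codimension $1$ that is contained in $\mm_{X,p}$, the point $q = \mathfrak{q} \in U$ is a regular point (as $\OO_{X,q}$ is a localization of the regular ring $\OO_{X,p}$), so it gives rise to a realized formal prime divisor $\varphi_q$ with $\cent(\varphi_q) = q \in U$. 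By hypothesis $\eta$ is regular at $\varphi_q$, so Lemma~\ref{lem.RegRealized} gives $\eta \in (\Omega_{X,reg}^m)_q$, i.e.\ $a/b \in \OO_{X,q}$.

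Thus $a/b$ lies in $\OO_{X,q}$ for every codimension-$1$ point $q$ of $\spec \OO_{X,p}$. Now I would invoke that $\OO_{X,p}$, being a regular local ring, is a normal domain, and that a Noetherian normal domain is the intersection of its localizations at height-$1$ primes (Krull's theorem, e.g.\ \cite[Thm.~11.5]{HM:1989}); applied to $\OO_{X,p}$ this gives $\OO_{X,p} = \bigcap_{\operatorname{ht} \mathfrak{p} = 1} (\OO_{X,p})_{\mathfrak{p}}$, and these localizations are precisely the $\OO_{X,q}$ above. Hence $a/b \in \OO_{X,p}$, so $\eta = (a/b)\gamma \in (\Omega_{X,reg}^m)_p$, completing the proof.

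The main obstacle is the bookkeeping at the start: one needs that $\eta \in \Omega_{X,rat}^m$ can genuinely be written in the form $a/b\,\gamma$ with a single regular generator $\gamma$ near $p$ and $a,b$ regular functions, which relies on the local freeness of $(\Omega_{X\mid\E}^{\wedge l})^{\otimes m}$ at a regular point and on $\iota$ being injective there — facts already recorded in the text — together with the observation that every height-$1$ prime of $\OO_{X,p}$ indeed comes from an honest codimension-$1$ point of a small enough affine $U$ whose local ring is regular, so that Definition~\ref{dfn.RealizedDiv} and Lemma~\ref{lem.RegRealized} apply. Once the setup is in place, the argument is just Lemma~\ref{lem.RegRealized} applied in codimension $1$ plus normality of regular local rings.
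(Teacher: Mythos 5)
Your proof is correct and is essentially the paper's argument: the paper also reduces to the inclusion $(\Omega_{X,adj}^m)_p \subseteq (\Omega_{X,reg}^m)_p$, uses that the regular local ring $\OO_{X,p}$ is normal so that regularity of a section of a rank-one free module is detected at codimension-$1$ points (citing the same intersection-over-height-one-localizations fact), and then applies Lemma~\ref{lem.RegRealized} at the realized formal prime divisors of those points. The only difference is presentational: the paper argues by contradiction (producing a single bad codimension-$1$ point $q$ with $p \in \overline{q}$), while you run the same argument directly via Krull's intersection theorem for normal domains.
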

\begin{proof}
By Lemma~\ref{lem.AdjReg1} it remains to show $(\Omega_{X,adj}^m)_p \subseteq (\Omega_{X,reg}^m)_p$. Assume indirectly that $\eta \in (\Omega_{X,adj}^m)_p$ but $\eta \not\in (\Omega_{X,reg}^m)_p$. Regularity is an open property and $\Omega_{X,reg}^m$ is free of rank $1$ in a neighborhood of $p$. Since $p$ is regular (in particular normal) we must have $\eta \not\in (\Omega_{X,reg}^m)_q$ for some point $q$ s.t.\ $q$ is regular, of codimension $1$ and $p \in \overline{q}$ (see \cite[Cor.~11.4]{DE:1995}). Consider the realized formal prime divisor $\varphi: \spec \widehat{\OO_{X,q}} \to X$. Lemma~\ref{lem.RegRealized} above implies that $\eta$ is not regular at $\varphi$. But $q = \cent(\varphi)$ is contained in any open neighborhood of $p$ contradicting $\eta \in (\Omega_{X,adj}^m)_p$.
\end{proof}

Now assume that $Y$ is \emph{regular}. In this situation one has $(\Omega_{Y \mid \E}^{\wedge l})^{\otimes m} \cong \Omega_{Y,reg}^m$. In terms of the canonical sheaf this means $\omega_{Y}^{\otimes m} \cong \Omega_{Y,reg}^m = \Omega_{Y,adj}^m$ by the above lemmas. Finally using Corollary~\ref{cor.AdjCov} we get an alternative characterization of the sheaf of $m$-adjoints, in fact, the usual definition when working in a category of desingularizable schemes ({\it e.g.}, for our case of characteristic zero).

\begin{cor}[Alternative Characterization of Adjoints]\label{cro.AltChar}
If $\pi: Y \to X$ is any desingularization then $\Omega_{X,adj}^m \cong \pi_*(\omega_Y^{\otimes m})$.
\end{cor}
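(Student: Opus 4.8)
The plan is to chain together the results already proved. First I would recall the setup: $Y$ is regular, $\pi : Y \to X$ is a desingularization (in particular proper and birational). The goal is the isomorphism $\Omega_{X,adj}^m \cong \pi_*(\omega_Y^{\otimes m})$ of sheaves on $X$.

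The first step is to identify $\omega_Y^{\otimes m}$ with $\Omega_{Y,adj}^m$ as subsheaves of $\Omega_{Y,rat}^m$. Since $Y$ is regular, the Kähler differentials $\Omega_{Y\mid\E}$ are locally free, so $(\Omega_{Y\mid\E}^{\wedge l})^{\otimes m}$ is locally free of rank $1$ and the natural map $\iota$ is an inclusion everywhere; thus $(\Omega_{Y\mid\E}^{\wedge l})^{\otimes m} \cong \Omega_{Y,reg}^m$, and by definition of the canonical sheaf the left side is $\omega_Y^{\otimes m}$. Then Lemma~\ref{lem.AdjReg2}, applied at every point of $Y$ (all of which are regular), gives $\Omega_{Y,reg}^m = \Omega_{Y,adj}^m$ as subsheaves of $\Omega_{Y,rat}^m$. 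Combining, $\omega_Y^{\otimes m} \cong \Omega_{Y,adj}^m$.

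The second step is to push forward along $\pi$. Apply $\pi_*$ to the isomorphism of the previous paragraph to get $\pi_*(\omega_Y^{\otimes m}) \cong \pi_*(\Omega_{Y,adj}^m)$. Now invoke Corollary~\ref{cor.AdjCov}: since $\pi$ is proper and birational, $\pi_*(\Omega_{Y,adj}^m) = \Omega_{X,adj}^m$ as subsheaves of $\Omega_{X,rat}^m$, where the identification uses the vector-space isomorphism $\pi^\# : \Omega_{X,rat}^m \to \Omega_{Y,rat}^m$. Chaining the two gives $\pi_*(\omega_Y^{\otimes m}) \cong \Omega_{X,adj}^m$, as required. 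One should note that the existence of such a $\pi$ is guaranteed in characteristic zero by resolution of singularities (Hironaka), which is the standing hypothesis in the excerpt.

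Honestly, there is no real obstacle here: the work has all been front-loaded into Lemma~\ref{lem.AdjReg2} and Corollary~\ref{cor.AdjCov}, and the corollary is a two-line assembly. The only mild subtlety worth a remark is that the claimed isomorphism is canonical — it does not depend on the choice of desingularization $\pi$ — because $\Omega_{X,adj}^m$ is defined intrinsically on $X$ via formal prime divisors; the covariance statement (Corollary~\ref{cor.AdjCov}), itself resting on Lemma~\ref{lem.DivPullback}, is precisely what makes the right-hand side independent of $Y$. I would state this independence explicitly in the proof, since it is the conceptual payoff.
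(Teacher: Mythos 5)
Your proposal is correct and follows exactly the paper's route: identify $\omega_Y^{\otimes m} \cong \Omega_{Y,reg}^m = \Omega_{Y,adj}^m$ using regularity of $Y$ and Lemma~\ref{lem.AdjReg2}, then push forward and apply Corollary~\ref{cor.AdjCov}. Your added remark on independence of the choice of $\pi$ is consistent with the paper's discussion and requires no change.
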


\section{Computing Adjoints}\label{sec.Computing}

Now let $X \subset \P_\E^{l+1}$ be a projective hypersurface with defining homogeneous equation $F \in \E[x_0,\ldots,x_{l+1}]$ of degree $d$ (not equal to a coordinate hyperplane). For $0 \le i \le l+1$ we define the open sets $U_i \subset X$ obtained by intersection with the standard open covering sets $x_i \neq 0$ of $\P_{\E}^{l+1}$.

\begin{dfn}[Dualizing Sheaf]\label{dfn.DualSheaf}
 By $\omega_X^0 \subseteq \Omega_{X,rat}^1$ we denote the \emph{dualizing sheaf}. It is invertible and generated on $U_i$ by the form
\begin{gather*}
\gamma_i := \sigma_{i,j} \left(\frac{\partial F / \partial x_j}{x_i^{d-1}}\right)^{-1} \d \frac{x_0}{x_i} \wedge \dots \wedge \widehat{\d \frac{x_i}{x_i}} \wedge  \dots \wedge \widehat{\d \frac{x_j}{x_i}} \wedge \dots \wedge \d \frac{x_{l+1}}{x_i}
\end{gather*}
for any choice of $j \neq i$ where
\begin{gather*}
\sigma_{i,j} := \begin{cases}
%(-1)^{i-j}   & \text{if $j<i$,} \\
%(-1)^{i-j-1} & \text{if $j>i$.}
(-1)^{i+j}   & \text{if $j<i$,} \\
(-1)^{i+j+1} & \text{if $j>i$.}
\end{cases}
\end{gather*}
(The hats here mean that the corresponding terms in the exterior product are to be excluded.)
\end{dfn}

Using the rules of calculus and the fact that $$0 = \d \frac{F}{x_i^d} = \sum_{0 \le k \le l+1, k \neq i} \frac{\partial F / \partial x_k}{x_i^{d-1}} \d \frac{x_k}{x_i}$$ holds in $\Omega_{X,rat}^1$ one proves that the definition is indeed independent of the choice of $j$. Because of local freeness we also have $(\omega_X^0)^{\otimes m} \subseteq \Omega_{X,rat}^m$ (meaning the natural map is an embedding).

\begin{lem}[Properties of the Dualizing Sheaf]\label{lem.DualSheaf}
For the dualizing sheaf we have:
\begin{itemize}
\item
$\omega_X^0 \cong \OO_X(d-l-2)$
\item
$(\omega_X^0)_p = (\Omega_{X,reg}^1)_p = (\Omega_{X,adj}^1)_p$ at all regular points $p$
\item
$\Omega_{X,adj}^m \subseteq (\omega_X^0)^{\otimes m}$ as subsheaves of $\Omega_{X,rat}^m$
\end{itemize}
\end{lem}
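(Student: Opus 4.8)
The plan is to prove the three bulleted assertions in turn, the first two being essentially standard and the third being the one that requires the adjoint machinery.

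For the first bullet, $\omega_X^0 \cong \OO_X(d-l-2)$, I would argue that $\omega_X^0$ is invertible (as asserted in Definition~\ref{dfn.DualSheaf}) and compute the transition functions. On the overlap $U_i \cap U_j$ one compares the two generators $\gamma_i$ and $\gamma_j$. Using the relation $0 = \d(F/x_i^d) = \sum_{k \neq i} (\partial F/\partial x_k)/x_i^{d-1}\, \d(x_k/x_i)$ to rewrite one wedge product in terms of the other (exactly the computation alluded to for independence of $j$), and tracking the powers of $x_i/x_j$ that appear when changing the affine chart coordinates $x_k/x_i = (x_k/x_j)(x_j/x_i)$, one finds that $\gamma_i = (x_j/x_i)^{d-l-2}\gamma_j$ up to sign, which is precisely the cocycle defining $\OO_X(d-l-2)$. (Alternatively one may simply invoke the adjunction formula $\omega_X \cong \omega_{\P^{l+1}}|_X \otimes \OO_X(X) \cong \OO_X(-l-2)\otimes\OO_X(d)$ together with the identification of the dualizing sheaf of a hypersurface with $\omega_X$, but since the generators $\gamma_i$ are given explicitly it seems cleaner to verify the cocycle directly.)

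For the second bullet, at a regular point $p$ the chain $(\omega_X^0)_p \subseteq (\Omega_{X,reg}^1)_p \subseteq (\Omega_{X,adj}^1)_p$ needs only its first inclusion checked, since the second is Lemma~\ref{lem.AdjReg1} and the reverse $(\Omega_{X,adj}^1)_p \subseteq (\Omega_{X,reg}^1)_p$ is Lemma~\ref{lem.AdjReg2}. So it suffices to observe that each $\gamma_i$, wherever $\partial F/\partial x_j$ is a unit in $\OO_{X,p}$ for some admissible $j$, lies in $\Omega_{X,reg}^1$: it is an $\OO_{X,p}$-combination (indeed an $\OO_{X,p}$-multiple) of wedges of the regular functions $\d(x_k/x_i)$. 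At a regular point $p$ of $X$ at least one partial derivative is nonzero (since $X$ is a hypersurface, regularity forces the Jacobian of $F$ to be nonzero at $p$, hence some $\partial F/\partial x_j$ does not vanish in the residue field and is a unit locally), so an appropriate $\gamma_i$ with an appropriate $j$ generates $(\omega_X^0)_p$ and lies in $(\Omega_{X,reg}^1)_p$; then $(\Omega_{X,reg}^1)_p$ being free of rank one (noted after $\iota$ is introduced) and containing the invertible $(\omega_X^0)_p$ forces equality.

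The third bullet is the substantive one: $\Omega_{X,adj}^m \subseteq (\omega_X^0)^{\otimes m}$ as subsheaves of $\Omega_{X,rat}^m$. Here I would argue locally. Fix an open $U_i$, let $\eta \in \Gamma(U_i,\Omega_{X,adj}^m)$, and write $\eta = f\,\gamma_i^{\otimes m}$ with $f \in \E(X)$, using that $\gamma_i^{\otimes m}$ is a rational generator of $\Omega_{X,rat}^m$ over $U_i$; the task is to show $f \in \Gamma(U_i,\OO_X)$. The key point is that $(\omega_X^0)^{\otimes m}$ is invertible hence reflexive, and a section of a reflexive (equivalently, normal) sheaf on a normal variety is determined by its values away from codimension $\ge 2$; more concretely, $\Gamma(U_i,\OO_X) = \bigcap_{q} \OO_{X,q}$, the intersection over points $q$ of codimension $1$ in $U_i$. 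For each such $q$: if $q$ is a regular point, the realized formal prime divisor $\varphi$ at $q$ (Definition~\ref{dfn.RealizedDiv}) satisfies, by adjointness of $\eta$ and Lemma~\ref{lem.RegRealized}, that $\eta \in (\Omega_{X,reg}^m)_q = (\omega_X^0)_p^{\otimes m}$ by the second bullet, forcing $f \in \OO_{X,q}$. If $q$ is a singular point of codimension $1$ — which can happen since we do not assume $X$ normal — then I still need $f \in \OO_{X,q}$. The main obstacle is precisely this non-normal codimension-one case: here I would argue that $X$ being a hypersurface is Cohen–Macaulay, hence satisfies Serre's condition $S_2$, so in fact $X$ \emph{is} normal in codimension $\le 1$ would fail only if $R_1$ fails, i.e. exactly at such singular $q$; but a hypersurface need not be $R_1$. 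The cleanest fix is to use that the dualizing sheaf $\omega_X^0$ is genuinely the dualizing sheaf and is $S_2$ (being invertible), so $\Gamma(U_i,(\omega_X^0)^{\otimes m}) = \bigcap_{q \in U_i,\ \operatorname{codim} q = 1,\ q \text{ regular}} ((\omega_X^0)^{\otimes m})_q$ — the singular codimension-one points, together with everything of codimension $\ge 2$, contribute nothing to the intersection because removing a closed set that is either of codimension $\ge 2$ or contained in the non-$S_1$ locus does not change sections of an $S_2$ sheaf. Thus it is enough to control $f$ at regular codimension-one $q$, which the previous paragraph does. Assembling: $\eta$ adjoint $\Rightarrow$ $f \in \OO_{X,q}$ for all regular codimension-one $q$ in $U_i$ $\Rightarrow$ $f\,\gamma_i^{\otimes m} \in \Gamma(U_i,(\omega_X^0)^{\otimes m})$, which is the claim. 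I expect the bookkeeping with $S_2$/reflexivity to be the only delicate point; everything else is the explicit chart computation of the first bullet plus direct appeals to Lemmas~\ref{lem.AdjReg1}, \ref{lem.RegRealized} and \ref{lem.AdjReg2}.
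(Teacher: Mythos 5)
Your handling of the first two bullets is essentially the paper's own chart computation; the only slip is at the end of the second bullet: the statement that $(\Omega_{X,reg}^1)_p$ is free of rank one and contains the invertible $(\omega_X^0)_p$ does \emph{not} by itself force equality (a free rank-one module has plenty of proper free rank-one submodules, e.g.\ $b\,\OO_{X,p}\gamma_i \subset \OO_{X,p}\gamma_i$). What you actually need is the reverse inclusion $(\Omega_{X,reg}^1)_p \subseteq (\omega_X^0)_p$, and this is immediate from your own computation, since every generating wedge $\d(x_{k_1}/x_i)\wedge\dots\wedge\d(x_{k_l}/x_i)$ of $\Omega_{X,reg}^1$ on $U_i$ is a $\Gamma(U_i,\OO_X)$-multiple of $\gamma_i$; this is how the paper argues, so the repair is one sentence.

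The third bullet, however, contains a genuine gap. The identity you rely on, $\Gamma(U_i,(\omega_X^0)^{\otimes m})=\bigcap_{q}\,((\omega_X^0)^{\otimes m})_q$ with $q$ running only over the \emph{regular} codimension-one points, is false in general: the $S_2$ property of an invertible sheaf lets you delete closed sets of codimension at least $2$ without changing sections, but never a codimension-one set, and for an arbitrary irreducible hypersurface the non-normal locus may well have codimension one. Concretely, take $F=x_0^2x_3-x_1^2x_2$ in $\P_{\E}^3$; the line $x_0=x_1=0$ is a codimension-one singular locus of $X$, and the rational function $u=x_0/x_1$ satisfies $u^2=x_2/x_3$, hence is integral over $\Gamma(U_3,\OO_X)$ and lies in $\OO_{X,q}$ for every regular codimension-one point $q\in U_3$, while $u\notin\OO_{X,\xi}$ at the generic point $\xi$ of that line; so $\bigcap_{q\ \mathrm{regular}}\OO_{X,q}\supsetneq\Gamma(U_3,\OO_X)$, and tensoring with the invertible sheaf $(\omega_X^0)^{\otimes m}$ gives the same failure for your claimed intersection formula. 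Thus the singular codimension-one points cannot be discarded, and controlling $f$ at such a point is precisely the substantive content of the bullet: the formal prime divisors centered at $\xi$ correspond to valuations dominating $\OO_{X,\xi}$ from a normalization or blowup, and regularity of $f\gamma_i^{\otimes m}$ at them only places $f$ in those valuation rings (roughly, in the integral closure), not in $\OO_{X,\xi}$; passing from there to membership in $(\omega_X^0)^{\otimes m}$ is a Rosenlicht/duality-type statement, not formal bookkeeping. This is exactly where the paper's proof takes a different route: it projects generically to a hyperplane $Z$, uses the trace $\sigma_{X\mid Z}$ and Kunz's Satz~2.14 to characterize $(\omega_X^0)^{\otimes m}$ as $\sigma_{X\mid Z}^{-1}(\Omega_{Z,reg}^m)$, and then Satz~2.15 together with the fact that the divisorial valuation rings of $\E(X)$ over a divisorial $R\subset\E(Z)$ are localizations of the integral closure of $R$, to conclude that the trace of an adjoint form is again adjoint. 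Your local argument would need a substitute for this input at the non-normal codimension-one points; as written, the reduction to regular points does not stand.
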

\begin{proof}
To prove the first statement one just shows, using the rules of calculus, that $\gamma_{i_1} = (x_{i_1}/x_{i_2})^{d-l-2} \gamma_{i_2}$. The same relation is fulfilled by the local generators $x_{i_1}^{d-l-2}$ and $x_{i_2}^{d-l-2}$.

We check the second statement for points $p \in U_i$. The forms $$\d \frac{x_0}{x_i} \wedge \dots \wedge \widehat{\d \frac{x_i}{x_i}} \wedge  \dots \wedge \widehat{\d \frac{x_j}{x_i}} \wedge \dots \wedge \d \frac{x_{l+1}}{x_i}$$ with $j \neq i$, which are all $\Gamma(U_i,\OO_{X})$-multiples of $\gamma_i$, clearly generate $\Gamma(U_i,\Omega_{X,reg}^1)$. Therefore we have $(\Omega_{X,reg}^1)_p \subseteq (\omega_X^0)_p$ at all points. Assuming moreover that $p$ is a regular point, there must be $j \neq i$ s.t.\ $(\partial F / \partial x_j)/x_i^{d-1} \not\in \mm_{X,p}$ and therefore $(\partial F / \partial x_j)/x_i^{d-1}$ is invertible in $\OO_{X,p}$. Choosing this $j$ in Definition~\ref{dfn.DualSheaf} one immediately sees that $\gamma_i$ is regular at $p$.

For the last statement we consider a generic projection $X \to Z$ to a hyperplane $Z \subseteq \P_{\E}^{l+1}$. We may assume that $Z$ is given by $x_0=0$ and that $F$ is monic in $x_0$.\todo{May we?} In this situation one can define a trace $\sigma_{X \mid Z}: \Omega_{X,rat}^m \to \Omega_{Z,rat}^m$ obtained from the trace of the field extension $\E(Z) \subseteq \E(X)$. By \cite[Satz 2.14]{EK:1978} and Lemma~\ref{lem.AdjReg2} we know that $(\omega_X^0)^{\otimes m} = \sigma_{X \mid Z}^{-1}(\Omega_{Z,reg}^m) = \sigma_{X \mid Z}^{-1}(\Omega_{Z,adj}^m)$. It remains to show that if $\alpha \in \Omega_{X,adj}^m$ then $\sigma_{X \mid Z}(\alpha) \in \Omega_{Z,adj}^m$.

Let $R \subset \E(Z)$ be a divisorial valuation ring. Further let $S_i \subset \E(X)$ (for $i$ in a finite index set) be the extensions, {\it i.e.}, divisorial valuation rings dominating $R$. Using Definition~\ref{dfn.AdjForm} (and the fact that completion is faithfully flat) we have to show that $\alpha \in \bigotimes (\bigwedge S_i \d S_i)$ for all $i$ implies $\sigma_{X \mid Z}(\alpha) \in \bigotimes (\bigwedge R \d R)$.\todo{Is for one $i$ sufficient?} By \cite[Prop.~VI.8.6.6]{NB:1989} the $S_i$ are localizations of the integral closure of $R$ in $\E(X)$. Then the statement follows from \cite[Satz 2.15]{EK:1978}.
\end{proof}

We want to see that, under certain additional assumptions, checking for adjointness involves only finitely many formal prime divisors.

\begin{lem}[Adjointness by Formal Desingularizations]\label{lem.FormalCrit}
Let $\SS$ be a formal desingularization of $X$ and $U \subset X$ an open subset. For $\eta \in \Omega_{X,rat}^m$ the following are equivalent:
\begin{itemize}
\item $\eta \in \Gamma(U,\Omega_{X,adj}^m)$
\item $\eta \in \Gamma(U \setminus \supp(\SS),\Omega_{X,adj}^m)$ and $\eta$ is regular at all $\varphi \in \SS$ with $\cent(\varphi) \in U$
\end{itemize}
\end{lem}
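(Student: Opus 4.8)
The plan is to prove the two implications of the equivalence separately. That the first condition implies the second is immediate: a form $\eta$ regular at every formal prime divisor on $X$ centered in $U$ is in particular regular at those centered in the smaller open set $U\setminus\supp(\SS)$ — so $\eta\in\Gamma(U\setminus\supp(\SS),\Omega_{X,adj}^m)$ — and at each $\varphi\in\SS$ with $\cent(\varphi)\in U$, since these too are formal prime divisors on $X$ centered in $U$.

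For the converse I would first fix, invoking Definition~\ref{dfn.FormalDesing}, a desingularization $\pi\colon Y\to X$ of which $\SS$ is a formal description, and transport the problem to $Y$. Putting $\eta':=\pi^\#(\eta)\in\Omega_{Y,rat}^m$ and using $\psi=\pi\circ(\pi^*\psi)$, hence $\psi^\#=(\pi^*\psi)^\#\circ\pi^\#$, the form $\eta$ is regular at a formal prime divisor $\psi$ on $X$ if and only if $\eta'$ is regular at $\pi^*\psi$ on $Y$; so by the bijection of Lemma~\ref{lem.DivPullback} it suffices to show that $\eta'$ is regular at every formal prime divisor $\chi$ on $Y$ whose center $q:=\cent(\chi)$ satisfies $\pi(q)\in U$. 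Here I would exploit that $Y$ is regular, so $\omega_Y^{\otimes m}=\Omega_{Y,reg}^m=\Omega_{Y,adj}^m$ is invertible: since $\OO_{Y,q}$ is a normal Noetherian local ring, hence the intersection of its height-one localizations, and since $\eta'$ is regular at $\chi$ once $\eta'\in(\Omega_{Y,reg}^m)_q$ (Lemma~\ref{lem.AdjReg1}), it is enough to prove $\eta'\in(\Omega_{Y,reg}^m)_{q''}$ for every codimension-one point $q''$ of $Y$ with $q\in\overline{\{q''\}}$.

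Such a $q''$ again lies over $U$ (a generization of a point of the open set $\pi^{-1}(U)$ stays in $\pi^{-1}(U)$) and is regular, so by Lemma~\ref{lem.RegRealized} the claim $\eta'\in(\Omega_{Y,reg}^m)_{q''}$ is exactly regularity of $\eta'$ at the realized formal prime divisor at $q''$. I would then split on whether $q''$ lies in $\supp(\pi^*\SS)=\pi^{-1}(\supp(\SS))$ (property~(2) of Definition~\ref{dfn.FormalDesc}). If not, then $\pi(q'')\in U\setminus\supp(\SS)$, and by Lemma~\ref{lem.DivPullback} the realized divisor at $q''$ is $\pi^*\psi''$ for some formal prime divisor $\psi''$ on $X$ centered in $U\setminus\supp(\SS)$, at which $\eta$ is regular by the first part of the hypothesis; transporting back settles this case. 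If $q''\in\supp(\pi^*\SS)$, then — since $\pi^*\SS$ consists of realized divisors (property~(1)) — $\supp(\pi^*\SS)$ is the union of the finitely many codimension-one irreducible closed sets $\overline{\{\cent(\pi^*\varphi)\}}$, $\varphi\in\SS$; a codimension-one point contained in such a set is its generic point, so $q''=\cent(\pi^*\varphi)$ for some $\varphi\in\SS$ with $\cent(\varphi)=\pi(q'')\in U$, and $\eta$ is regular at $\varphi$ by the second part of the hypothesis, whence $\eta'$ is regular at $\pi^*\varphi$, which is the realized divisor at $q''$.

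The main obstacle — essentially the only step that is not bookkeeping — is the reduction ``regularity in codimension one suffices'' used in the second paragraph: a rational $m$-canonical form on an open subset of the regular scheme $Y$ that is regular at all codimension-one points is regular on that whole subset. This is precisely the reflexivity (Serre's condition $S_2$) of the invertible sheaf $\omega_Y^{\otimes m}$ on the normal scheme $Y$, and it is the same mechanism already used in the proof of Lemma~\ref{lem.AdjReg2}. The remaining ingredients are the bijection $\pi^*$ of Lemma~\ref{lem.DivPullback} and the three defining properties of a formal description.
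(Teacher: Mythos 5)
Your proof is correct and follows essentially the same route as the paper's: pass to the desingularization $\pi\colon Y\to X$ described by $\SS$, use regularity (hence normality) of $Y$ to reduce adjointness to regularity at codimension-one points, convert these via Lemma~\ref{lem.RegRealized} into realized formal prime divisors, and identify them either with elements of $\pi^*\SS$ or with divisors centered over $U\setminus\supp(\SS)$. The only cosmetic differences are that you carry out the codimension-one reduction stalkwise (a normal Noetherian local domain as the intersection of its height-one localizations) where the paper invokes the pure codimension-one non-regularity locus, and that you treat the locus off $\supp(\pi^*\SS)$ through the bijection of Lemma~\ref{lem.DivPullback} rather than through Corollary~\ref{cor.AdjCov} together with property (3) of Definition~\ref{dfn.FormalDesc}.
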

\begin{proof}
The first implication is trivial, so assume that the second condition is true. Let $\pi: Y \to X$ be a desingularization that is described by $\SS$ and set $V := \pi^{-1}(U)$.

By Corollary~\ref{cor.AdjCov} and Lemma~\ref{lem.AdjReg2} we have $\pi^\#(\Gamma(U,\Omega_{X,adj}^m)) = \Gamma(V,\Omega_{Y,adj}^m) = \Gamma(V,\Omega_{Y,reg}^m)$. Since $\pi$ induces an isomorphism $V \setminus \supp(\pi^*(\SS)) \cong U \setminus \supp(\SS)$ it remains to check that $\pi^\#(\eta)$ is regular in $\supp(\pi^*(\SS)) \cap V$. Since $Y$ is regular the locus of non-regularity of $\pi^\#(\eta)$ has pure codimension $1$. Hence, by Lemma~\ref{lem.RegRealized}, it is sufficient to check regularity of $\pi^\#(\eta)$ at the formal prime divisors in $\pi^*(\SS)$ with center in $V$. Equivalently, working on $X$, we have to check regularity of $\eta$ at the corresponding formal prime divisors in $\SS$. 
\end{proof}

In the following definition we assume that we have chosen free generators $\omega_{\varphi,m} \in \widetilde{\Omega}_{\F_\varphi\ll t\rr  \mid \E}^m$ ({\it e.g.}, $\omega_{\varphi,m} = (\d s_{\varphi,1} \wedge \dots \wedge \d s_{\varphi,l-1} \wedge \d t)^{\otimes m}$) for each formal prime divisor $\varphi: \spec \F_\varphi\ll t\rr  \to X$.

\begin{dfn}[Valuations Associated to Formal Prime Divisors]\label{dfn.AssocVals}
Let $\varphi: \spec \F_{\varphi}\ll t\rr \to X$ be a formal prime divisor. Define $\kappa_{\varphi} := \ord_t \circ \varphi^\# : \E(X) \to \Z$, which is a \emph{divisorial valuation}. We ``extend the valuation'' to $\Omega_{X,rat}^m$ as follows: If $\eta \in \Omega_{X,rat}^m$ and $\varphi^\#(\eta) = f \omega_{\varphi,m}$, then $\kappa_{\varphi}(\eta) := \kappa_{\varphi}(f)$. Finally we can also define ``valuations'' $\kappa_{\varphi} : \Gamma(X, \OO_X(k)) \to \Z$ for $k \in \Z$ by setting $\kappa_{\varphi}(f) := \kappa_{\varphi}(f/x_i^{k})$ for any index $i$ s.t.\ $\cent(\varphi) \in U_i$.
\end{dfn}

The map from $\Omega_{X,rat}^m$ is obviously well-defined because two free generators can differ only by a unit in $\F_\varphi\ll t\rr $ which has order $0$. We should also make sure that the definition for the map from $\Gamma(X, \OO_X(k))$ does not depend on the choice of the index $i$. Assume that $j \neq i$ is another index with $\cent(\varphi) \in U_j$. Then $f/x_i^{k} = (x_j/x_i)^{k} f/x_j^{k}$ and hence $\kappa_{\varphi}(f/x_i^{k}) = k\,\kappa_{\varphi}(x_j/x_i) + \kappa_{\varphi}(f/x_j^{k})$. Since $\cent(\varphi) \in U_i \cap U_j$ we have that $x_j/x_i \in \OO_{X,\cent(\varphi)}$ is invertible and so is $\varphi^\#(x_j/x_i) \in \F_\varphi\ll t\rr $. But then again $\kappa_{\varphi}(x_j/x_i) = \ord_t(\varphi^\#(x_j/x_i)) = 0$.

\begin{dfn}[Adjoint Order]\label{dfn.AdjOrder}
Let $\varphi: \spec \F_\varphi\ll t\rr  \to X$ be a formal prime divisor and $0 \le i \le l+1$ an index s.t.\ $\cent(\varphi) \in U_i$. We define the \emph{adjoint order} at $\varphi$ as $\alpha_{\varphi} := -\kappa_{\varphi}(\gamma_i)$.
\end{dfn}

This definition is again independent of the index $i$ by an analogous reasoning as above.

\begin{thm}[Global Sections of Twisted Pluricanonical Sheaves]\label{thm.FormalCrit}
Let $\SS$ be a formal desingularization of $X$. Then
\begin{gather*}
\Gamma(X, \OO_X(n) \otimes_{\OO_X} \Omega_{X,adj}^m) \cong \{ f \in \Gamma(X, \OO_X(n + m (d-l-2))) \mid \kappa_{\varphi}(f) \ge m\, \alpha_{\varphi} \text{ for all } \varphi \in \SS\}.
\end{gather*}
\end{thm}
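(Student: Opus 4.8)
The plan is to translate the statement of Theorem~\ref{thm.FormalCrit} into the language already developed and then apply Lemma~\ref{lem.FormalCrit} with $U = X$. First I would use the isomorphism $\omega_X^0 \cong \OO_X(d-l-2)$ from Lemma~\ref{lem.DualSheaf} to rewrite $\OO_X(n) \otimes_{\OO_X} (\omega_X^0)^{\otimes m} \cong \OO_X(n + m(d-l-2))$, and then observe that via the embedding $(\omega_X^0)^{\otimes m} \subseteq \Omega_{X,rat}^m$ a global section of $\OO_X(n) \otimes_{\OO_X} (\omega_X^0)^{\otimes m}$ is the same data as a rational $m$-form $\eta$ which on each $U_i$ with $\cent(\varphi) \in U_i$ looks like $(f/x_i^{n})\,(x_i^{\,d-l-2}\gamma_i)^{\otimes m}$ for a suitable $f \in \Gamma(X, \OO_X(n+m(d-l-2)))$ — more precisely, the correspondence $f \leftrightarrow \eta$ is determined by $\eta = (f/x_i^{\,n+m(d-l-2)})\,\gamma_i^{\otimes m}$ on $U_i$, and the third bullet of Lemma~\ref{lem.DualSheaf} guarantees $\Gamma(X,\OO_X(n)\otimes\Omega_{X,adj}^m)$ sits inside this space of $\eta$'s. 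So the theorem reduces to: such an $\eta$ lies in $\Gamma(X,\Omega_{X,adj}^m)$ iff $\kappa_\varphi(f) \ge m\,\alpha_\varphi$ for all $\varphi \in \SS$.

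Next I would apply Lemma~\ref{lem.FormalCrit} with $U = X$. Since $\eta$ is already a section of $(\omega_X^0)^{\otimes m}$, on the open set $X \setminus \supp(\SS)$ every point is regular (this is where the hypothesis that $\SS$ is a \emph{formal desingularization} enters: the associated $\pi\colon Y \to X$ is an isomorphism away from $\supp(\SS)$, so $X \setminus \supp(\SS)$ is contained in the regular locus up to what $\pi$ contracts — in any case $\supp(\SS)$ contains the singular locus of $X$), and there by Lemma~\ref{lem.DualSheaf} one has $(\omega_X^0)_p = (\Omega_{X,adj}^m)_p$, so $\eta$ is automatically adjoint on $X \setminus \supp(\SS)$. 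Hence Lemma~\ref{lem.FormalCrit} says $\eta \in \Gamma(X,\Omega_{X,adj}^m)$ iff $\eta$ is regular at every $\varphi \in \SS$.

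It then remains to unwind what ``$\eta$ is regular at $\varphi$'' means in terms of $f$ and the adjoint order. By Definition~\ref{dfn.RegAtPrimeDiv}, regularity at $\varphi$ is the condition $\kappa_\varphi(\eta) \ge 0$ (using the generator $\omega_{\varphi,m}$ of $\widetilde{\Omega}^m_{\F_\varphi\ll t\rr \mid\E}$ to define $\kappa_\varphi$ on forms). Picking $i$ with $\cent(\varphi) \in U_i$ and writing $\eta = (f/x_i^{\,n+m(d-l-2)})\,\gamma_i^{\otimes m}$, the additivity of $\kappa_\varphi$ (a valuation) gives
\begin{gather*}
\kappa_\varphi(\eta) = \kappa_\varphi(f/x_i^{\,n+m(d-l-2)}) + m\,\kappa_\varphi(\gamma_i) = \kappa_\varphi(f) - m\,\alpha_\varphi,
\end{gather*}
where the first term is $\kappa_\varphi(f)$ by the extension of $\kappa_\varphi$ to $\Gamma(X,\OO_X(n+m(d-l-2)))$ in Definition~\ref{dfn.AssocVals}, and $\kappa_\varphi(\gamma_i) = -\alpha_\varphi$ by Definition~\ref{dfn.AdjOrder}. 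So $\kappa_\varphi(\eta) \ge 0$ is exactly $\kappa_\varphi(f) \ge m\,\alpha_\varphi$, and quantifying over $\varphi \in \SS$ yields the claimed description; one should note here that $\kappa_\varphi(f)$ can be $+\infty$ if $f$ vanishes identically on $\cent(\varphi)$, but the inequality is still meaningful and the argument is unaffected.

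The routine bookkeeping — checking that the identification of global sections with rational $m$-forms is compatible with the gluing across the charts $U_i$, and that the various index-independence claims line up — is straightforward from the material already in the paper. The only genuinely substantive point, and the one I would be most careful about, is the claim that $\eta$ is automatically adjoint on $X \setminus \supp(\SS)$: this needs that $\supp(\SS)$ contains the non-regular locus of $X$, which follows because a desingularization $\pi$ described by $\SS$ restricts to an isomorphism over $X \setminus \supp(\SS)$ and $Y$ is regular, forcing $X \setminus \supp(\SS)$ to be regular. Everything else is a direct application of Lemma~\ref{lem.FormalCrit}, Lemma~\ref{lem.DualSheaf}, and the valuation-theoretic definitions.
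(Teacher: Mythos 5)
Your proposal is correct and follows essentially the same route as the paper's proof: identify $\OO_X(n)\otimes_{\OO_X}(\omega_X^0)^{\otimes m}$ with $\OO_X(n+m(d-l-2))$ via Lemma~\ref{lem.DualSheaf}, represent a section chart-wise by $(f/x_i^{\,n+m(d-l-2)})\,\gamma_i^{\otimes m}$, invoke Lemma~\ref{lem.FormalCrit} (adjointness off $\supp(\SS)$ being automatic since that locus is regular and regular forms are adjoint there), and translate regularity at each $\varphi\in\SS$ into $\kappa_\varphi(f)\ge m\,\alpha_\varphi$ using Definitions~\ref{dfn.AssocVals} and~\ref{dfn.AdjOrder}. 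You even make explicit a point the paper leaves implicit, namely that $X\setminus\supp(\SS)$ lies in the regular locus because the desingularization described by $\SS$ is an isomorphism there; the only cosmetic caveat is that for $n\neq 0$ the ``global form $\eta$'' should really be handled chart by chart (as the paper does), which your index-independence remark already covers.
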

\begin{proof}
By Lemma~\ref{lem.DualSheaf} we can view $\OO_X(n) \otimes_{\OO_X} \Omega_{X,adj}^m$ as a subsheaf of $\OO_X(n) \otimes_{\OO_X} (\omega_{X}^0)^{\otimes m} \cong \OO_X(n + m (d-l-2))$. Let $f \in \Gamma(X,\OO_X(n + m (d-l-2)))$ be a global section and $\eta$ its preimage in $\Gamma(X,\OO_X(n) \otimes_{\OO_X} (\omega_{X}^0)^{\otimes m})$ via this isomorphism. Projecting $\eta$ to the sections over $U_i$ we find $$\eta \mapsto x_i^n \otimes \frac{f}{x_i^{n+m(d-l-2)}}\gamma_i^{\otimes m}.$$ We have to check whether $f/x_i^{n+m(d-l-2)} \gamma_i^{\otimes m} \in \Gamma(U_i,\Omega_{X,adj}^m)$ for all $i$. Again by Lemma~\ref{lem.DualSheaf} this form is adjoint at all regular points. Applying now Lemma~\ref{lem.FormalCrit} it is equivalent to check that $$\kappa_{\varphi}(f/x_i^{n+m(d-l-2)} \gamma_i^{\otimes m}) \ge 0$$ for all $i$ and any formal prime divisor $\varphi \in \SS$ with $\cent(\varphi) \in U_i$. This again is equivalent to $$\kappa_{\varphi}(f/x_i^{n+m(d-l-2)}) \ge -m\kappa_{\varphi}(\gamma_i) = m\, \alpha_{\varphi}$$ for all $\varphi \in \SS$.
\end{proof}

\section{The Algorithm}\label{sec.Algorithm}

Concerning the actual computation we close with a few remarks and give an explicit algorithm. First since a hypersurface $X$ is in particular a complete intersection we know that $$\Gamma(X, \OO_X(n + m (d-l-2))) \cong (\E[x_0,\ldots,x_{l+1}] / \langle F \rangle)_{n + m (d-l-2)}$$ (see, for example, \cite[Exer.~III.5.5.(a)]{RH:1977}). If $\preccurlyeq$ is some well-ordering on exponents compatible with the group structure and $\mu_0$ is the leading exponent of $F$ w.r.t.\ $\preccurlyeq$ then we can write $$(\E[x_0,\ldots,x_{l+1}] / \langle F \rangle)_{n + m (d-l-2)} = \langle \underline{x}^{\mu} \mid \lvert \mu \rvert = n + m (d-l-2) \text{ and } \mu \preccurlyeq \mu_0 \rangle_{\E}.$$

Second we want to comment on the computation of adjoint orders (see Definition~\ref{dfn.AdjOrder}). Therefore we have to determine $\kappa_{\varphi}(\gamma_i)$. More generally let $\eta \in \Omega_{X,rat}^1$ be arbitrary. Further let $u_1,\ldots,u_{l} \in \E(X)$ be a transcendence basis over $\E$. As a generator of $\widetilde{\Omega}_{\F_\varphi\ll t\rr}^1$ we choose as before $\omega_{\varphi,1} = \d s_{\varphi,1} \wedge \dots \wedge \d s_{\varphi,l-1} \wedge \d t$ and we can write $\eta = f \d u_1 \wedge \dots \wedge \d u_{l}$. Then by the rules of calculus $$\varphi^\#(\eta) = \varphi^\#(f) \left\lvert \frac{\partial(\varphi^\#(u_1),\dots,\varphi^\#(u_{l}))}{\partial(s_{\varphi,1},\ldots,s_{\varphi,l-1},t)} \right\rvert \d s_{\varphi,1} \wedge \dots \wedge \d s_{\varphi,l-1} \wedge \d t$$ and hence $$\kappa_{\varphi}(\eta) = \kappa_{\varphi}(f) + \ord_t \left\lvert \frac{\partial(\varphi^\#(u_1),\dots,\varphi^\#(u_{l}))}{\partial(s_{\varphi,1},\ldots,s_{\varphi,l-1},t)} \right\rvert.$$ In order to compute the order of the Jacobian one can use approximative methods, {\it i.e.}, compute with truncations of the involved power series of sufficiently high precision. In any case one has to be able to compute in the universal module of differentials of the field extension $\F_\varphi \mid \E$. This proves still a little difficult in current computer algebra systems. It is therefore preferable to compute the adjoint orders, when possible, simultaneously with the formal desingularization. This involves essentially repeated application of the chain rule of differential calculus.

With these remarks and the above notation it is now obvious how to derive an algorithm. Correctness of the following is immediate by Theorem~\ref{thm.FormalCrit}, and termination is trivial because formal desingularizations can be computed and consist of \emph{finitely many} formal prime divisors.
\begin{algorithm}[H]
\caption{$Adjoints(F : \E[x_0, \dots, x_{l+1}], m: \N, n: \N) : 2^{\E[x_0, \dots, x_{l+1}]}$}\label{alg.Adjoints}
\begin{algorithmic}[1]
\REQUIRE an irreducible, homogeneous polynomial $F$ of degree $d$ (not equal to $x_i$ for any $0 \le i \le l+1$), defining $X \subset \P_{\E}^{l+1}$
\ENSURE a basis for the space of global sections of $\OO_X(n) \otimes_{\OO_X} \pi_*(\omega_Y^{\otimes m})$ represented by homogeneous polynomials of degree $n + m(d-l-2)$ where $\pi: Y \to X$ is any desingularization
\\[1ex]
\STATE Let $B \subset \E[x_0, \dots, x_{l+1}]$ be a set representing a basis of $(\E[x_0, \dots, x_{l+1}]/\langle F \rangle)_{n + m(d-l-2)}$;
\STATE Compute a formal desingularization $\SS$ of $X$ and adjoint orders $\alpha_\varphi$ for all $\varphi \in \SS$;
\STATE $C := 0 \in \E^{\infty \times \lvert B \rvert}$;
\COMMENT{a matrix with an undetermined number of rows}
\FOR{$\varphi \in \SS$}
\STATE $A := \sum_{b \in B} c_b \, Trunc(\varphi^\#(b), m\,\alpha_\varphi) = \sum_{0 \le j < m\,\alpha_\varphi} a_j t^j$;
\COMMENT{with $a_j$ linear in $\F_\varphi[c_b \mid b \in B]$}
\STATE $C := AddConstraints(C, \{a_j\}_{0 \le j < m\,\alpha_\varphi})$;
\ENDFOR
\STATE Let $K \subset \E^{\# B}$ be a basis of $\operatorname{ker}(C)$;
\STATE \textbf{return} $\{ \sum_{b \in B} c_b b \mid (c_b)_{b \in B} \in K \}$;
\end{algorithmic}
\end{algorithm}

Here $Trunc(\varphi^\#(b), m\,\alpha_\varphi)$ means the truncation of $\varphi^\#(b)$ at order $m\,\alpha_\varphi$. It remains to explain the function $AddConstraints$. It is meant to stack new rows on top of the matrix $C$, representing the linear constraints imposed by the formal prime divisor $\varphi$.

Assume $\E \subset \F' \subset \F$ is a tower of field extensions where $\F$ over $\F'$ is simple (algebraic or transcendental). Let $a := \sum_{b \in B} y_b c_b \in \F[c_b \mid b \in B]$. We want to find values $c_b \in \E$ s.t.\ the linear constraint $a=0$ is fulfilled. We are done if we know how to translate the constraint equivalently to a finite number of linear constraints over the smaller field $\F'$. Using this step recursively and considering the fact that $\F$ over $\E$ is finitely generated, we finally get a set of constraints with coefficients in $\E$. If $\sum_{b \in B} y_{b} c_b = 0$ is such a constraint, the function $AddConstraints$ would stack the row vector $(y_{b})_{b \in B}$ on top of the matrix $C$. We distinguish two cases:
\begin{itemize}
\item
If $\F$ over $\F'$ is algebraic, say, of degree $e+1$, choose a basis $\{f_r\}_{0 \le r \le e}$ of $\F$ as an $\F'$-vector space. Then $$0 = \sum_{b \in B} y_b c_b = \sum_{b \in B} c_b \sum_{0 \le r \le d} y_{b,r} f_r = \sum_{0 \le r \le e} \left(\sum_{b \in B} y_{b,r} c_b\right) f_r$$ holds if and only if $\sum_{b \in B} y_{b,r} c_b = 0$ for all $0 \le r \le e$.
\item
Now assume $\F = \F'(s)$ is transcendental. W.l.o.g. we may assume that the $y_b$ are actually polynomials in $\F'[s]$, otherwise multiply the equation by the common denominator. Let $e$ be the maximal degree of all the $y_b$. Then $$0 = \sum_{b \in B} y_b c_b = \sum_{b \in B} c_b \sum_{0 \le r \le e} y_{b,r} s^r = \sum_{0 \le r \le e} \left(\sum_{b \in B} y_{b,r} c_b\right) s^r$$ holds again if and only if $\sum_{b \in B} y_{b,r} c_b = 0$ for all $0 \le r \le e$.
\end{itemize}

\section{Example}

Let $\E := \Q$ and write $x := x_0, y := x_1, z := x_2, w := x_3$. The homogeneous polynomial $F := w^3y^2z+(xz+w^2)^3 \in \Q[x,y,z,w]$ of degree $d = 6$ defines a hypersurface $X \subset \P_{\Q}^3$, {\it i.e.}, $l = \dim(X) = 2$. We compute a formal desingularization $\SS$ using Algorithm~1 of \cite{TB:2007}. Amongst others, we get a formal prime divisor $\varphi: \spec \F_{\varphi}\ll t \rr \to X$ defined by the $\Q$-algebra homomorphism
\begin{gather*}
\varphi^\#: \Q[x,y,z,w]/\langle F \rangle \to \F_{\varphi}\ll t \rr:
\begin{cases}
x \mapsto 1, \\
y \mapsto -\tfrac{8}{s}t^3, \\
z \mapsto \tfrac{64}{s}t^6, \\
w \mapsto  - \tfrac{8}{s} \alpha t^3 - \tfrac{8}{s} t^4 + \tfrac{4}{s^2} \alpha t^5 + \tfrac{1}{s^3} \alpha t^7 + \tfrac{1}{2s^4} \alpha t^9 + \AO(t^{11})
\end{cases}
\end{gather*}
where $\F_{\varphi} = \Q(s)[\alpha]$ and $\alpha$ has minimal polynomial $\alpha^2 + s$.

First we want to compute the adjoint order of this formal prime divisor. Therefore we consider the rational differential form $$\frac{x^{d-1}}{\partial F / \partial w} \d \tfrac{y}{x} \wedge \d \tfrac{z}{x} = \frac{x^5}{6x^2z^2w + 12xzw^3 + 3y^2zw^2 + 6w^5} \d \tfrac{y}{x} \wedge \d \tfrac{z}{x}.$$ Now we apply the map induced by $\varphi^\#$ and find the differential form $$\frac{1}{\tfrac{786432}{s^4} \alpha t^{17} + \tfrac{1572864}{s^4} t^{18} - \tfrac{1966080}{s^5} \alpha t^{19} + \AO(t^{20})} \d -\tfrac{8}{s}t^3 \wedge \d \tfrac{64}{s}t^6.$$ According to Definition~\ref{dfn.AssocVals} we generate by $\d s \wedge \d t$ and rewrite this form again to $$\frac{1}{\tfrac{512}{s} \alpha t^{9} + \tfrac{1024}{s} t^{10} - \tfrac{1280}{s^2} \alpha t^{11} + \AO(t^{12})} \d s \wedge \d t.$$ The coefficient has order $-9$, so $\kappa_{\varphi} = 9$ as of Definition~\ref{dfn.AdjOrder}.

Assume now, we want to compute the global sections of $\pi_*(\omega_Y) \otimes_{\OO_X} \OO_X(1)$ (where $\pi: Y \to X$ is any desingularization, not necessarily the one described by $\SS$), {\it i.e.}, we have $m = 1$ and $n = 1$. We compute $n + m(d-l-2) = 1 + 1(6-2-2) = 3$. Therefore we first need a set $B$ projecting bijectively to the component of $\Q[x,y,z,w]/\langle F \rangle$ of homogeneous degree $3$. Since the defining equation is of degree $6$ we can choose the set of all monomials of degree $3$: $$B := \{x^3, x^2y, xy^2, y^3, x^2z, xyz, y^2z, xz^2, yz^2, z^3, x^2w, xyw, y^2w, xzw, yzw, z^2w, xw^2, yw^2, zw^2, w^3\}$$

Applying $\varphi^\#$ to the generic form of degree $3$ we find:
\begin{align*}
\varphi^\#(\textstyle \sum_{b \in B} c_b b) = & (c_{x^3})t^0 +
(- \tfrac{8}{s} c_{x^2y} - \tfrac{8}{s}\alpha c_{x^2w})t^3 +
(- \tfrac{8}{s} c_{x^2w})t^4 +
(\tfrac{4}{s^2}\alpha c_{x^2w})t^5 +\\
& (\tfrac{64}{s^2} c_{xy^2} + \tfrac{64}{s} c_{x^2z} + \tfrac{64}{s^2} \alpha c_{xyw} - \tfrac{64}{s} c_{xw^2})t^6 +
(\tfrac{1}{s^3} \alpha c_{x^2w} + \tfrac{64}{s^2} c_{xyw} + \tfrac{128}{s^2} \alpha c_{xw^2})t^7 +\\
& (- \tfrac{32}{s^3} \alpha c_{xyw} + \tfrac{128}{s^2} c_{xw^2})t^8 + \AO(t^9)
\end{align*}

A form is adjoint iff its $\varphi^\#$-image vanishes with order greater or equal to $m \kappa_{\varphi} = 1 \cdot 9 = 9$, {\it i.e.}, the coefficients of $t^0,\dots,t^8$ have to vanish. Viewing $B$ as an ordered basis we can write this as a matrix in $\Q(s)[\alpha]^{9 \times 20}$:
\begin{gather*}
\left(\begin{array}{rrrrrrrrrrrrrrrrrrrr}
1 & 0 & 0 & 0 & 0 & 0 & 0 & 0 & 0 & 0 & 0 & 0 & 0 & 0 & 0 & 0 & 0 & 0 & 0 & 0 \\
0 & 0 & 0 & 0 & 0 & 0 & 0 & 0 & 0 & 0 & 0 & 0 & 0 & 0 & 0 & 0 & 0 & 0 & 0 & 0 \\
0 & 0 & 0 & 0 & 0 & 0 & 0 & 0 & 0 & 0 & 0 & 0 & 0 & 0 & 0 & 0 & 0 & 0 & 0 & 0 \\
0 & -\tfrac{8}{s} & 0 & 0 & 0 & 0 & 0 & 0 & 0 & 0 & -\tfrac{8}{s}\alpha & 0 & 0 & 0 & 0 & 0 & 0 & 0 & 0 & 0 \\
0 & 0 & 0 & 0 & 0 & 0 & 0 & 0 & 0 & 0 & -\tfrac{8}{s} & 0 & 0 & 0 & 0 & 0 & 0 & 0 & 0 & 0 \\
0 & 0 & 0 & 0 & 0 & 0 & 0 & 0 & 0 & 0 & \tfrac{4}{s^2}\alpha & 0 & 0 & 0 & 0 & 0 & 0 & 0 & 0 & 0 \\
0 & 0 & \tfrac{64}{s^2} & 0 & \tfrac{64}{s} & 0 & 0 & 0 & 0 & 0 & 0 & \tfrac{64}{s^2}\alpha & 0 & 0 & 0 & 0 & -\tfrac{64}{s} & 0 & 0 & 0 \\
0 & 0 & 0 & 0 & 0 & 0 & 0 & 0 & 0 & 0 & \tfrac{1}{s^3}\alpha & \tfrac{64}{s^2} & 0 & 0 & 0 & 0 & \tfrac{128}{s^2}\alpha & 0 & 0 & 0 \\
0 & 0 & 0 & 0 & 0 & 0 & 0 & 0 & 0 & 0 & 0 & -\tfrac{32}{s^3}\alpha & 0 & 0 & 0 & 0 & \tfrac{128}{s^2} & 0 & 0 & 0
\end{array}\right)
\end{gather*}

The third row from the bottom corresponds to the constraint $$\tfrac{64}{s^2} c_{xy^2} + \tfrac{64}{s} c_{x^2z} + \tfrac{64}{s^2} \alpha c_{xyw} - \tfrac{64}{s} c_{xw^2} = 0.$$ Reordering this in terms of the basis $\alpha^0, \alpha^1$ of $\Q(s)[\alpha]$ over $\Q(s)$ we find $$(\tfrac{64}{s^2} c_{xy^2} + \tfrac{64}{s} c_{x^2z} - \tfrac{64}{s} c_{xw^2})\alpha^0 + (\tfrac{64}{s^2} c_{xyw})\alpha^1 = 0$$ and hence the two equivalent constraints $$\tfrac{64}{s^2} c_{xy^2} + \tfrac{64}{s} c_{x^2z} - \tfrac{64}{s} c_{xw^2} = 0 \text{ and } \tfrac{64}{s^2} c_{xyw} = 0.$$

The second one is clearly equivalent to $64 c_{xyw} = 0$ which is already a constraint over $\Q$. The first one can be multiplied by $s^2$ and rewritten in terms of $s^0$ and $s^1$: $$(64 c_{xy^2})s^0 + (64 c_{x^2z} - 64 c_{xw^2})s^1 = 0$$ This yields another two constraints $$64 c_{xy^2} = 0 \text{ and } 64 c_{x^2z} - 64 c_{xw^2} = 0$$ over $\Q$. Altogether the third row from the bottom corresponds to the following matrix in $\Q^{3 \times 20}$:
\begin{gather*}
\left(\begin{array}{rrrrrrrrrrrrrrrrrrrr}
0 & 0 & 64 & 0 & 0 & 0 & 0 & 0 & 0 & 0 & 0 & 0 & 0 & 0 & 0 & 0 & 0 & 0 & 0 & 0 \\
0 & 0 & 0 & 0 & 64 & 0 & 0 & 0 & 0 & 0 & 0 & 0 & 0 & 0 & 0 & 0 & -64 & 0 & 0 & 0 \\
0 & 0 & 0 & 0 & 0 & 0 & 0 & 0 & 0 & 0 & 0 & 64 & 0 & 0 & 0 & 0 & 0 & 0 & 0 & 0
\end{array}\right)
\end{gather*}

Treating all rows similarly, stacking the computed matrices on top of each other and skipping zero rows we obtain:
\begin{gather*}
\left(\begin{array}{rrrrrrrrrrrrrrrrrrrr}
1 & 0 & 0 & 0 & 0 & 0 & 0 & 0 & 0 & 0 & 0 & 0 & 0 & 0 & 0 & 0 & 0 & 0 & 0 & 0 \\
0 & -8 & 0 & 0 & 0 & 0 & 0 & 0 & 0 & 0 & 0 & 0 & 0 & 0 & 0 & 0 & 0 & 0 & 0 & 0 \\
0 & 0 & 0 & 0 & 0 & 0 & 0 & 0 & 0 & 0 & -8 & 0 & 0 & 0 & 0 & 0 & 0 & 0 & 0 & 0 \\
0 & 0 & 0 & 0 & 0 & 0 & 0 & 0 & 0 & 0 & -8 & 0 & 0 & 0 & 0 & 0 & 0 & 0 & 0 & 0 \\
0 & 0 & 0 & 0 & 0 & 0 & 0 & 0 & 0 & 0 & 4 & 0 & 0 & 0 & 0 & 0 & 0 & 0 & 0 & 0 \\
0 & 0 & 64 & 0 & 0 & 0 & 0 & 0 & 0 & 0 & 0 & 0 & 0 & 0 & 0 & 0 & 0 & 0 & 0 & 0 \\
0 & 0 & 0 & 0 & 64 & 0 & 0 & 0 & 0 & 0 & 0 & 0 & 0 & 0 & 0 & 0 & -64 & 0 & 0 & 0 \\
0 & 0 & 0 & 0 & 0 & 0 & 0 & 0 & 0 & 0 & 0 & 64 & 0 & 0 & 0 & 0 & 0 & 0 & 0 & 0 \\
0 & 0 & 0 & 0 & 0 & 0 & 0 & 0 & 0 & 0 & 0 & 64 & 0 & 0 & 0 & 0 & 0 & 0 & 0 & 0 \\
0 & 0 & 0 & 0 & 0 & 0 & 0 & 0 & 0 & 0 & 1 & 0 & 0 & 0 & 0 & 0 & 0 & 0 & 0 & 0 \\
0 & 0 & 0 & 0 & 0 & 0 & 0 & 0 & 0 & 0 & 0 & 0 & 0 & 0 & 0 & 0 & 128 & 0 & 0 & 0 \\
0 & 0 & 0 & 0 & 0 & 0 & 0 & 0 & 0 & 0 & 0 & 0 & 0 & 0 & 0 & 0 & 128 & 0 & 0 & 0 \\
0 & 0 & 0 & 0 & 0 & 0 & 0 & 0 & 0 & 0 & 0 & -32 & 0 & 0 & 0 & 0 & 0 & 0 & 0 & 0
\end{array}\right)
\end{gather*}

Doing the same computation for all the other formal prime divisors in $\SS$ one computes a huge matrix which turns out to have a one-dimensional kernel spanned by
\begin{gather*}
\left(\begin{array}{rrrrrrrrrrrrrrrrrrrr}
0 & 0 & 0 & 0 & 0 & 0 & 0 & 0 & 0 & 0 & 0 & 0 & 0 & 1 & 0 & 0 & 0 & 0 & 0 & 1 \\
\end{array}\right).
\end{gather*}
The entries of this vector are the coefficients of the form $xzw + w^3$. So we have $$\Gamma(X, \pi_*(\omega_Y) \otimes_{\OO_X} \OO_X(1)) \cong \langle xzw + w^3 \rangle_{\Q}.$$
% where the form should be considered an element of $\Q[x,y,z,w]/\langle F \rangle$.

%%% Local Variables: 
%%% TeX-master: "~/doc/work/academic/worked_texts/Surface-Param/SurfParm.tex"
%%% End: 

% Endmatter
% ---------
\bibliographystyle{amsplain}
\bibliography{bibfile}

\end{document}